\newcommand{\globalcolor}[1]{%
  \color{#1}\global\let\default@color\current@color
}
\definecolor{blush}{rgb}{0.87, 0.36, 0.51}
	\definecolor{brightcerulean}{rgb}{0.11, 0.67, 0.84}
	\definecolor{greenryb}{rgb}{0.4, 0.69, 0.2}
\newif\ifdark
\definecolor{darkred}{rgb}{0.9,0.2,0.2}
\definecolor{darkblue}{rgb}{0.7,0.3,1}
\definecolor{darkgreen}{rgb}{0.1,0.9,0.1}
\definecolor{franck}{rgb}{0,0.8,1}
\definecolor{pagebackground}{rgb}{.15,.21,.18}
\definecolor{pageforeground}{rgb}{.84,.84,.85}
\definecolor{symbols}{rgb}{0,0.7,1}
\colorlet{connection}{red!80!black}
\colorlet{boxcolor}{blue!50}
\definecolor{darkred}{rgb}{0.7,0.1,0.1}
\definecolor{darkblue}{rgb}{0.4,0.1,0.8}
\definecolor{darkgreen}{rgb}{0.1,0.7,0.1}
\definecolor{franck}{rgb}{0,0,1}
\definecolor{pagebackground}{rgb}{1,1,1}
\definecolor{pageforeground}{rgb}{0,0,0}
\colorlet{symbols}{blue!90!black}
\colorlet{connection}{red!30!black}
\colorlet{boxcolor}{blue!50!black}
\def\slash{\leavevmode\unskip\kern0.18em/\penalty\exhyphenpenalty\kern0.18em}
\def\dash{\leavevmode\unskip\kern0.18em--\penalty\exhyphenpenalty\kern0.18em}
\DeclareMathAlphabet{\mathbbm}{U}{bbm}{m}{n}
\DeclareFontFamily{U}{BOONDOX-calo}{\skewchar\font=45 }
\DeclareFontShape{U}{BOONDOX-calo}{m}{n}{
  <-> s*[1.05] BOONDOX-r-calo}{}
\DeclareFontShape{U}{BOONDOX-calo}{b}{n}{
  <-> s*[1.05] BOONDOX-b-calo}{}
\DeclareMathAlphabet{\mcb}{U}{BOONDOX-calo}{m}{n}
\SetMathAlphabet{\mcb}{bold}{U}{BOONDOX-calo}{b}{n}
\setlist{noitemsep,topsep=4pt,leftmargin=1.5em}
\DeclareMathAlphabet{\mathbbm}{U}{bbm}{m}{n}
\DeclareMathAlphabet{\mcb}{U}{BOONDOX-calo}{m}{n}
\SetMathAlphabet{\mcb}{bold}{U}{BOONDOX-calo}{b}{n}
\DeclareFontFamily{U}{mathx}{\hyphenchar\font45}
\DeclareFontShape{U}{mathx}{m}{n}{
      <5> <6> <7> <8> <9> <10>
      <10.95> <12> <14.4> <17.28> <20.74> <24.88>
      mathx10
      }{}
\DeclareSymbolFont{mathx}{U}{mathx}{m}{n}
\DeclareMathSymbol{\bigtimes}{1}{mathx}{"91}
\def\emptyset{{\centernot\ocircle}}
\providecommand{\figures}{false}
{ \ifthenelse{\equal{\figures}{false}} {#1}{\[ {\rm Figure \ missing !} \]} }{}
\newcommand{\bn}{{\mathbf n}}
\newcommand{\cA}{{\mathcal A}}
\def\CA{\mathcal{A}}
\def\CM{\mathcal{M}}
\tikzstyle{tinydots}=[dash pattern=on \pgflinewidth off \pgflinewidth]
\tikzstyle{superdense}=[dash pattern=on 4pt off 1pt]
\newcommand{\mcR}{\mathcal{R}}
\newcommand{\mcL}{\mathcal{L}}
\newcommand{\length}[1]{\mathcal{l}({#1})}
\newcommand{\coord}{{\mcR}}
\newcommand{\beq}{\begin{equation}}
\newcommand{\eeq}{\end{equation}}
\newcommand{\mfL}{\mathfrak{L}}
\newcommand{\mfl}{\mathfrak{l}}
\def\${|\!|\!|}
\newenvironment{DIFnomarkup}{}{} 
\newtheorem{example}[lemma]{Example}
\newcommand{\rrightarrow}{{\to\hskip -4.9mm\raise 1pt\hbox{$\to$}}}
\newfont{\indic}{bbmss12}
\def\Nabla_#1{\nabla_{\!#1}}
    \pgfmathsetlength{\pgf@xb}{\pgfkeysvalueof{/pgf/outer xsep}}%
    \pgfmathsetlength{\pgf@yb}{\pgfkeysvalueof{/pgf/outer ysep}}%
\def\symbol#1{\textcolor{symbols}{#1}}
\def\decorate#1#2{
        \ifnum#2>0
    		\foreach \count in {1,...,#2}{
	       	let
				\p1 = (sourcenode.center),
                \p2 = (sourcenode.east),
				\n1 = {\x2-\x1},
				\n2 = {1mm},
				\n3 = {(1.3+0.6*(\count-1))*\n1},
				\n4 = {0.7*\n1}
			in 
        		node[rectangle,fill=symbols,rotate=30,inner sep=0pt,minimum width=0.2*\n2,minimum height=\n2] at ($(sourcenode.center) + (\n3,\n4)$) {}
				}
		\fi
        \ifnum#1>0
    		\foreach \count in {1,...,#1}{
	       	let
				\p1 = (sourcenode.center),
                \p2 = (sourcenode.east),
				\n1 = {\x2-\x1},
				\n2 = {1mm},
				\n3 = {(1.3+0.6*(\count-1))*\n1},
				\n4 = {0.7*\n1}
			in 
        		node[rectangle,fill=symbols,rotate=-30,inner sep=0pt,minimum width=0.2*\n2,minimum height=\n2] at ($(sourcenode.center) + (-\n3,\n4)$) {}
				}
		\fi
}
\tikzset{
    dectriangle/.style 2 args={
        triangle,
        alias=sourcenode,
        append after command={\decorate{#1}{#2}}
    },
    dectriangle/.default={0}{0},
}
\tikzset{
	cross/.style={path picture={ 
  		\draw[symbols]
			(path picture bounding box.south east) -- (path picture bounding box.north west) (path picture bounding box.south west) -- (path picture bounding box.north east);
		}},
root/.style={circle,fill=green!50!black,inner sep=0pt, minimum size=1.2mm},
        dot/.style={circle,fill=pageforeground,inner sep=0pt, minimum size=1mm},
        dotred/.style={circle,fill=pageforeground!50!pagebackground,inner sep=0pt, minimum size=2mm},
        var/.style={circle,fill=pageforeground!10!pagebackground,draw=pageforeground,inner sep=0pt, minimum size=3mm},
        kernel/.style={semithick,shorten >=2pt,shorten <=2pt},
        kernels/.style={snake=zigzag,shorten >=2pt,shorten <=2pt,segment amplitude=1pt,segment length=4pt,line before snake=2pt,line after snake=5pt,},
        rho/.style={densely dashed,semithick,shorten >=2pt,shorten <=2pt},
           testfcn/.style={dotted,semithick,shorten >=2pt,shorten <=2pt},
        renorm/.style={shape=circle,fill=pagebackground,inner sep=1pt},
        labl/.style={shape=rectangle,fill=pagebackground,inner sep=1pt},
        xic/.style={very thin,circle,draw=symbols,fill=symbols,inner sep=0pt,minimum size=1.2mm},
        g/.style={very thin,rectangle,draw=symbols,fill=symbols!10!pagebackground,inner sep=0pt,minimum width=2.5mm,minimum height=1.2mm},
        xi/.style={very thin,circle,draw=symbols,fill=symbols!10!pagebackground,inner sep=0pt,minimum size=1.2mm},
	xies/.style={very thin,rectangle,fill=green!50!black!25,draw=symbols,inner sep=0pt,minimum size=1.1mm},
	xiesf/.style={very thin,rectangle,fill=green!50!black,draw=symbols,inner sep=0pt,minimum size=1.1mm},
        xix/.style={very thin,crosscircle,fill=symbols!10!pagebackground,draw=symbols,inner sep=0pt,minimum size=1.2mm},
        X/.style={very thin,cross,rectangle,fill=pagebackground,draw=symbols,inner sep=0pt,minimum size=1.2mm},
	xib/.style={thin,circle,fill=symbols!10!pagebackground,draw=symbols,inner sep=0pt,minimum size=1.6mm},
	xie/.style={thin,circle,fill=green!50!black,draw=symbols,inner sep=0pt,minimum size=1.6mm},
	xid/.style={thin,circle,fill=symbols,draw=symbols,inner sep=0pt,minimum size=1.6mm},
	xibx/.style={thin,crosscircle,fill=symbols!10!pagebackground,draw=symbols,inner sep=0pt,minimum size=1.6mm},
	kernels2/.style={very thick,draw=connection,segment length=12pt},
	keps/.style={thin,draw=symbols,->},
	kepspr/.style={thick,draw=connection,->},
	krho/.style={thin,draw=symbols,superdense,->},
	krhopr/.style={thick,draw=connection,superdense},
	triangle/.style = { regular polygon, regular polygon sides=3},
	not/.style={thin,circle,draw=connection,fill=connection,inner sep=0pt,minimum size=0.5mm},
	diff/.style = {very thin,draw=symbols,triangle,fill=red!50!black,inner sep=0pt,minimum size=1.6mm},
	diff1/.style = {very thin,dectriangle={1}{0},fill=red!50!black,draw=symbols,inner sep=0pt,minimum size=1.6mm},
	diff2/.style = {very thin,dectriangle={1}{1},fill=red!50!black,draw=symbols,inner sep=0pt,minimum size=1.6mm},
		diffmini/.style = {very thin,rectangle,fill=black,draw=black,inner sep=0pt,minimum size=0.75mm},
	 kernelsmod/.style={very thick,draw=connection,segment length=12pt},
	 rec/.style = {very thin,rectangle,fill=black,draw=black,inner sep=0pt,minimum size=2mm},
	cerc/.style={very thin,circle,draw=black,fill=symbols,inner sep=0pt,minimum size=2mm},
	stars/.style={very thin,star,star points=6,star point ratio=0.5, draw=black,fill=red,inner sep=0pt,minimum size=0.7mm},
	>=stealth,
        }
        \tikzset{
root/.style={circle,fill=black!50,inner sep=0pt, minimum size=3mm},
        circ/.style={circle,fill=white,draw=black,very thin,inner sep=.5pt, minimum size=1.2mm},
        round1/.style={fill=white,outer sep = 0,inner sep=2pt,rounded corners=1mm,draw,text=black,thin,minimum size=1.2mm},
          circ1/.style={circle,fill=red!10,draw=red,very thin,inner sep=.5pt, minimum size=1.2mm},
        rect/.style={fill=white,outer sep = 0,inner sep=2pt,rectangle,draw,text=black,thin,minimum size=1.2mm},
        rect1/.style={fill=white,outer sep = 0,inner sep=2pt,rectangle,draw,text=black,thin,minimum size=1.2mm},
        round2/.style={fill=red!10,outer sep = 0,inner sep=2pt,rounded corners=1mm,draw,text=black,thin,minimum size=1.2mm},
       round3/.style={fill=blue!10,outer sep = 0,inner sep=2pt,rounded corners=1mm,draw,text=black,thin,minimum size=1.2mm}, 
        rect2/.style={fill=black!10,outer sep = 0,inner sep=2pt,rectangle,draw,text=black,thin,minimum size=1.2mm},
        dot/.style={circle,fill=black,inner sep=0pt, minimum size=1.2mm},
        dotred/.style={circle,fill=black!50,inner sep=0pt, minimum size=2mm},
        var/.style={circle,fill=black!10,draw=black,inner sep=0pt, minimum size=3mm},
        kernel/.style={semithick,shorten >=2pt,shorten <=2pt},
         diag/.style={thin,shorten >=4pt,shorten <=4pt},
        kernel1/.style={thick},
        kernels/.style={snake=zigzag,shorten >=2pt,shorten <=2pt,segment amplitude=1pt,segment length=4pt,line before snake=2pt,line after snake=5pt,},
		kernels1/.style={snake=zigzag,segment amplitude=0.5pt,segment length=2pt},
		rho1/.style={densely dotted,semithick},
        rho/.style={densely dashed,semithick,shorten >=2pt,shorten <=2pt},
           testfcn/.style={dotted,semithick,shorten >=2pt,shorten <=2pt},
           visible/.style={draw, circle, fill, inner sep=0.25ex},
        renorm/.style={shape=circle,fill=white,inner sep=1pt},
        labl/.style={shape=rectangle,fill=white,inner sep=1pt},
        xic/.style={very thin,circle,fill=symbols,draw=black,inner sep=0pt,minimum size=1.2mm},
        xi/.style={very thin,circle,fill=blue!10,draw=black,inner sep=0pt,minimum size=1.2mm},
	xib/.style={very thin,circle,fill=blue!10,draw=black,inner sep=0pt,minimum size=1.6mm},
	xie/.style={very thin,circle,fill=green!50!black,draw=black,inner sep=0pt,minimum size=1mm},
	xid/.style={very thin,circle,fill=symbols,draw=black,inner sep=0pt,minimum size=1.6mm},
	edgetype/.style={very thin,circle,draw=black,inner sep=0pt,minimum size=5mm},
	nodetype/.style={very thick,circle,draw=black,inner sep=0pt,minimum size=5mm},
	kernels2/.style={very thick,draw=connection,segment length=12pt},
clean/.style={thin,circle,fill=black,inner sep=0pt,minimum size=1mm},	not/.style={thin,circle,fill=symbols,draw=connection,fill=connection,inner sep=0pt,minimum size=0.8mm},
	>=stealth,
        }
\def\DeclareSymbol#1#2#3{%
	\expandafter\gdef\csname MH@symb@#1\endcsname{\tikzsetnextfilename{symbol#1}%
	\tikz[baseline=#2,scale=0.15,draw=symbols,line join=round]{#3}}%
	\expandafter\gdef\csname MH@symb@#1s\endcsname{\scalebox{0.75}{\tikzsetnextfilename{symbol#1}%
	\tikz[baseline=#2,scale=0.15,draw=symbols,line join=round]{#3}}}%
	\expandafter\gdef\csname MH@symb@#1ss\endcsname{\scalebox{0.65}{\tikzsetnextfilename{symbol#1}%
	\tikz[baseline=#2,scale=0.15,draw=symbols,line join=round]{#3}}}%
	}
\def\<#1>{\ifthenelse{\boolean{mmode}}{\mathchoice{\csname MH@symb@#1\endcsname}{\csname MH@symb@#1\endcsname}{\csname MH@symb@#1s\endcsname}{\csname MH@symb@#1ss\endcsname}}{\csname MH@symb@#1\endcsname}}
 \def\1{\mathbf{\symbol{1}}}
\DeclareMathAlphabet{\mathpzc}{OT1}{pzc}{m}{it}
\def\eqref#1{(\ref{#1})}
\newcommand*{\bigcdot}{}
\DeclareRobustCommand*{\bigcdot}{%
  \mathbin{\mathpalette\bigcdot@{}}%
}
\newcommand*{\bigcdot@scalefactor}{.5}
\newcommand*{\bigcdot@widthfactor}{1.15}
\newcommand*{\bigcdot@}[2]{%
  \sbox0{$#1\vcenter{}$}
  \sbox2{$#1\cdot\m@th$}%
  \hbox to \bigcdot@widthfactor\wd2{%
    \hfil
    \raise\ht0\hbox{%
      \scalebox{\bigcdot@scalefactor}{%
        \lower\ht0\hbox{$#1\bullet\m@th$}%
      }%
    }%
    \hfil
  }%
}
\def\two{{\<generic>\kern0.05em\<genericb>}}
\def\twoI{{\<Ito>\kern0.05em\<Itob>}}
\def\st{\mathsf{fgt}}
\def\mail#1{\burlalt{#1}{mailto:#1}}
\begin{document}

\def\st{\mathsf{fgt}}
\def\mail#1{\burlalt{#1}{mailto:#1}}

\title{Multi-indices coproducts
from ODEs to singular SPDEs}
\author{Yvain Bruned$^1$, Yingtong Hou$^1$,}
\institute{ 
 IECL (UMR 7502), Université de Lorraine
  \\
Email:\ \begin{minipage}[t]{\linewidth}
\mail{yvain.bruned@univ-lorraine.fr}
\\
	\mail{yingtong.hou@univ-lorraine.fr}
\end{minipage}}

\def\dsqcup{\sqcup\mathchoice{\mkern-7mu}{\mkern-7mu}{\mkern-3.2mu}{\mkern-3.8mu}\sqcup}

\maketitle

\begin{abstract}
\ \ \ \ In this work, we introduce explicit formulae for the coproducts at play for multi-indices in ODEs and in singular SPDEs.
The two coproducts described correspond to versions of the Butcher-Connes-Kreimer and extraction-contraction coproducts with multi-indices. The main idea is to use the fact that the coproducts are the adjoint of dual products for which one has an explicit simple formula. We are able to derive the explicit formulae via an inner product defined from a symmetry factor easily computable for multi-indices.
\end{abstract}

\setcounter{tocdepth}{1}
\tableofcontents

\setlength{\parskip}{0.5\baselineskip}
\section{ Introduction }

Multi-indices were introduced for scalar valued singular quasi-linear stochastic partial differential equations (SPDEs) in \cite{OSSW}. The idea was to provide a different combinatorial set from the decorated trees used in \cite{BHZ} for coding local expansions of solutions of these singular dynamics. These local expansions are the core of the theory of Regularity Structures invented by Martin Hairer in \cite{reg}. This theory via a black box on decorated trees \cite{reg,BHZ,CH16,BCCH} covers a large class of equations. Multi-indices would replace decorated trees if one wants to index these expansions by the elementary differentials and not by the recentrered iterated stochastic integrals. The main Hopf algebraic structure for multi-indices has been made precise in \cite{LOT} where the authors described the Hopf algebra needed for recentering stochastic iterated integrals. This Hopf algebra is in correspondence with the one on decorated trees in \cite{reg,BHZ} which is connected to the Butcher-Connes-Kreimer Hopf algebra \cite{Butcher72,CK}. Since then, works related to multi-indices have appeared both on the analytical/probabilistic side and the algebraic side in \cite{BK23,LOTT,T,JZ,BL23,GT}.
For comprehensive surveys on multi-indices, interested readers can refer to \cite{LO23,OST}. Let us stress that multi-indices have appeared in different forms in previous works in the litterature, notably in \cite{DL} where the authors characterise the free Novikov algebra with multi-indices. They appear also in the context of numerical analysis in \cite{MV16} where the authors used composition maps for characterising affine equivariant methods (see also \cite{BEH24} where multi-indice B-series are introduced).

Another Hopf algebra crucial for singular SPDE is the extraction-contraction Hopf algebra introduced in \cite{BHZ}.  It is used for renormalising iterated integrals. It is an extension of the Hopf algebra introduced in \cite{CA} inspired from the substitution of B-series in numerical analysis (see \cite{CHV05,CHV07}). At the level of multi-indices, this Hopf algebra has been introduced in \cite{Li23} in the context of Rough Paths for defining translation of rough ODEs (a similar result with decorated trees was previously established in \cite{BCFP}). Such an approach breaks donwn if one does not want to use extended decorations for SPDEs. One can still get renormalised SPDEs in \cite{BL23} with ideas similar to preparation maps coming from \cite{BR18,BB21b}.

The two Hopf algebras mentioned above have been understood on the dual side by introducing the appropriate post-Lie/pre-Lie products and then via variants of the Guin-Oudom procedure (see \cite{GD,Guin1,ELM,LOT,BK23,JZ}), one can construct an explicit associative products. Up to recently, there was no explicit formulae for the coproducts. The first attempt is made in \cite{GMZ24} where the authors derive an explicit formula for recentring coproduct in the context of ODEs. Their main strategy is to compare it with the Butcher-Connes-Kreimer coproducts going back and forth from multi-indices to rooted trees. In the present work, we push forward the programme for finding explicit coproducts formulae. Indeed, we provide formulae for the Butcher-Connes-Kreimer and extraction-contraction type coproducts at play both in the context of ODEs and SPDEs. We can state our main meta theorem as follows:

\begin{theorem} \label{main_theorem} The ODEs and SPDEs multi-indices coproducts can be computed via explicit formulae involving explicit derivations or their adjoints maps, symmetry factors and admissible splittings of (populated) multi-indices.
\end{theorem}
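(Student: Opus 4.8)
The plan is to reduce the meta-statement to four concrete computations — the Butcher--Connes--Kreimer-type and the extraction--contraction-type coproducts, each in the ODE and in the SPDE setting — and to treat all four by the same mechanism: realise each coproduct as the adjoint of an explicitly known product with respect to an inner product built from symmetry factors. Concretely, for each setting I would first fix the graded vector space $V$ spanned by the relevant (populated) multi-indices, equip it with the bilinear form $\langle z_1, z_2\rangle = S(z_1)\,\delta_{z_1,z_2}$, where $S$ is the symmetry factor, and check that this form is non-degenerate on each homogeneous component (immediate, since multi-indices then form an orthogonal family). The reason symmetry factors are the right choice is that $S$ is an explicit product of factorials over the entries of a multi-index, so the pairing is completely computable and the passage "product $\leftrightarrow$ adjoint coproduct" stays elementary.

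Next I would recall, in each of the four cases, the dual product on $V$: a grafting/pre-Lie product in the BCK case and an insertion (substitution) product in the extraction--contraction case, both already available in closed combinatorial form (via the post-Lie/pre-Lie structures and the Guin--Oudom procedure cited in the introduction), together with the associative product $\act$ they generate. The defining relation for the coproduct $\Delta$ is then $\langle \Delta z,\; z_1\otimes z_2\rangle = \langle z,\; z_1\act z_2\rangle$ for all $z_1,z_2$, and the explicit formula for $\Delta$ is obtained by expanding $z_1\act z_2$ as a linear combination of multi-indices, isolating the coefficient of a fixed $z$, and re-expressing the resulting sum over ``ways of assembling $z$ from two pieces'' as a sum over \emph{admissible splittings} of $z$, with the symmetry factors $S(z)$, $S(z_1)$, $S(z_2)$ furnishing the combinatorial prefactors. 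The derivations (or their adjoints) in the statement enter precisely here: the pre-Lie product is encoded by a derivation $D$ on multi-indices, the Guin--Oudom product is an iterated weighted sum of $D$'s, and passing to the adjoint turns these into the adjoint derivations appearing in the final formulae; in the SPDE case an extra derivation records the noise/edge decoration and its adjoint must be carried along in parallel.

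Finally I would verify compatibility and normalisation: that the counit and the grading are respected, that the SPDE formulae degenerate to the ODE ones upon forgetting the decoration, and — as a consistency check — that translating through rooted trees recovers the classical Butcher--Connes--Kreimer and extraction--contraction coproducts, matching the comparison carried out in \cite{GMZ24} in the recentring case.

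The step I expect to be the real obstacle is the explicit inversion of the Guin--Oudom product when taking the adjoint: since $\act$ is an iterated, weighted sum of pre-Lie graftings/insertions, enumerating \emph{all} decompositions $z = z_1 \act z_2$ of a given multi-index and collecting them with the correct symmetry weights is a genuine combinatorial problem — one must show that this raw sum reorganises into the clean admissible-splitting form and that the ratios of symmetry factors telescope into the stated coefficients. A secondary, more bookkeeping-level difficulty arises in the populated/SPDE case, where the populatedness constraint restricts which splittings are admissible and where the edge-type derivation interacts with $D$; keeping the two adjoint derivations straight is where normalisation errors would most plausibly appear.
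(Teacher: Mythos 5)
Your proposal follows essentially the same route as the paper: in each of the four cases (BCK-type and extraction--contraction, for ODEs and SPDEs) the coproduct is characterised as the adjoint of the explicitly known product ($\star_2$ built from the derivation $D$, resp.\ the insertion product $\star_1$ built from $D$ and $\partial_{z_k}$, and their SPDE analogues) with respect to the symmetry-factor inner product, the coefficients are extracted by pairing and reorganised as sums over admissible splittings, and the adjoint derivations $\bar D$, $\bar D^{(\bn)}$, $\bar\partial^k$ are computed to yield the alternative formulae. The difficulty you flag is defused in the paper exactly by the multiplicativity $S(z^{\beta}z^{\bar\beta})=S(z^{\beta})S(z^{\bar\beta})$, which makes the symmetry-factor ratios telescope as you anticipated.
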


In order to make more explicit our main theorem, we briefly present the definitions of the main objects. We suppose given abstract variables $ (z_k)_{k \in \mathbb{N}} $. A multi-indice is denoted by 
\begin{equs}
	z^{\beta} := \prod_{k \in \mathbb{N}} z_k^{\beta(k)}, \quad \beta : \mathbb{N} \rightarrow \mathbb{N}
\end{equs}
where $ \beta $ has finite support. We define two derivations: One is $ \partial_{z_k} $ that takes derivatives according to the variable $ z_k $, the other one is $D$ which replaces one variable $ z_k$ by $z_{k+1}$. We define an inner product $ \langle \cdot, \cdot \rangle$ on multi-indices and forest of multi-indices (finite collection of multi-indices) from a symmetry factor $S(\cdot)$ given in \eqref{symmetry_factor_2}. Then, we set $ \bar{D} $  to be the adjoint of the map $D$ for the previous inner product. In Theorem~\ref{alternative_LOT}, one has the following explicit expression for the BCK type coproduct
\begin{equs} \label{explicit_coproduct_BCK_2_intro}
	\Delta z^{\beta} &= 
z^{\mathbf{0}} \otimes z^\beta + z^\beta \otimes z^{\mathbf{0}}
\\&+
\sum_{\substack{{\beta  = \beta_1 + \cdots + \beta_n + \hat{\beta}} \\ n \in \mathbb{N}^*}} \frac{1}{S_{\tiny{\text{ext}}}( \tilde{\prod}_{i=1}^n z^{\beta_i})}
\tilde{\prod}_{i=1}^n z^{\beta_i}    \otimes  \bar{D}^{n} z^{\hat{\beta}},		 
\end{equs}
where $   \tilde{\prod}_{i=1}^n z^{\beta_i}$ is a forest of multi-indices and $ S_{\tiny{\text{ext}}}(\cdot) $ is a symmetry factor connected to $S(\cdot)$. Here, $z^{\mathbf{0}} \otimes z^\beta + z^\beta \otimes z^{\mathbf{0}}$ is the primitive part of the coproduct and $\beta  = \beta_1 + \cdots + \beta_n + \hat{\beta} $ is an admissible splitting for this coproduct: The $\beta_i$ and $ \hat{\beta} $ need to satisfy certain combinatorial conditions. For the extraction-contraction coproduct, one needs a different admissible splitting. In the SPDEs context, the reasoning is the same but it is more complex due to the presence of other derivatives.
Let us list our main definitions/results below
\begin{itemize}
	\item Definitions of derivations: \eqref{derivation_ODE}, \eqref{derivations_reg}.
	\item Coproducts with derivations: Propositions~\ref{LOT_coproduct}, \ref{LOT_coproduct2}, \ref{LOT_SPDE}, \ref{extraction_SPDE_1}.
	\item Explicit expression of various adjoint maps: Propositions~\ref{bar_D}, \ref{adjoint_SPDEs} .
	\item Coproducts with adjoint maps: Theorems~\ref{alternative_LOT}, \ref{LOT_coproduct_3}, \ref{LOT_SPDE_A}, \ref{extraction_SPDE_A}.
	\end{itemize}

In comparison to \cite{GMZ24}, we do not use the same symmetry factor but rather the ones which have been historically introduced for multi-indices. Moreover, we avoid the use of trees by providing  proofs that refer only to multi-indices. The main ingredient is the inner product and the fact that the symmetry factor of a multi-indice is easy to compute.
Also, its multiplicative property, namely
\begin{equs}
	S(z^{\beta} z^{\bar{\beta}}) = S(z^{\beta}) S(z^{\bar{\beta}})
	\end{equs}
is extremly helpful as such a property is not true for rooted trees. In the context of ODEs, knownig explicit formulae for the two coproducts allows to compute composition and substitution of multi-indices $B$-series (see \cite{BEH24}). 

For SPDE multi-indices, we have chosen to follow the formalism intoduced in \cite{BD23} which differs from \cite{LOT}. The main difference is that in \cite{LOT}, the authors apply the chain rule and expand the computation with derivatives whereas \cite{BD23} is more compact.
In this part, we define the product $ \star_1 $ as an immediate extension from the product used for ODEs. This formula is new from the current litterature. Also as for ODEs, one can compute compositon and substitution of Regularity structures $B$-series (see \cite{B23}) and explicit formulae are useful in  this context.

Let us outline the paper by summarising the content of its sections. In Section \ref{Sec::2},
we start by recalling multi-indices in the context of ODEs motivated from the elementary differentials that one obtains in dimension one in \eqref{elementary_differential_ODE}. We recall the populated condition \eqref{population_ODE} essential for selecting the meaningful multi-indices which are the one appearing in the Picard iteration of the ODE. One important derivation for the sequel is the derivation $  D$ given in \eqref{derivation_ODE} which gives a Novikov product. We finish the section by introducing some important quantities such as symmetry factors for multi-indices $ \eqref{symmetry_factor_1} $ and for forests of multi-indices  \eqref{symmetry_factor_2}. These symmetry factors help us to define an inner product \eqref{inner_product} that will be extensively used in the sequel.

In Section \ref{Sec::3}, we begin by recalling the definition of the product $ \star_2 $ in  \eqref{definition_star_2} which is built out of the derivation $ D $. The adjoint of this product is the coproduct $ \Delta $ (see \eqref{dual_Delta}). Equipped with these definitions, we are able to give an explicit formula for the map $ \Delta $ in 
Proposition~\ref{LOT_coproduct}. This formula is based on all the ways to split a populated multi-indice into a sum of populated multi-indices and a remainder. The formula heavily depends on the derivation $D$ and the use of the inner product previously introduced. Examples of computations are provided in Example~\ref{example_1}.
We refine this explicit formula by computing the derivation's adjoint map $ \bar{D} $ in Proposition~\ref{bar_D} which leads to the second explicit formula in Theorem~\ref{alternative_LOT}. Then, we introduce the insertion product  $\blacktriangleright$ in \eqref{insertion_product}. It is defined from some explicit derivations $ D $ and $ \partial_{z_k} $. It is extended into a simultaneous insertion $ \star_1 $ in \eqref{multiple_insertion}. In Proposition~\ref{LOT_coproduct2}, we obtain an explicit formula for the adjoint of $\star_1$ denoted by $ \Delta^{\!-} $ that we call the extraction-contraction multi-indices coproduct. By using the previously defined adjoint map $\bar D$, we are able to provide an alternative formula for $ \Delta^{\!-} $ in Theorem \ref{LOT_coproduct_3}.

In Section \ref{Sec::4}, we motivate the introduction of a more general class of multi-indices that are used for describing local expansion of solutions of singular SPDEs. We call them SPDE multi-indices. They contain words whose letters do not commute reflecting non-commutative derivatives (see \eqref{non_commutation}). These multi-indices come with a similar populated condition \eqref{populated_SPDE} and derivations $ D^{(\bn)}, \partial_i $ which do not commute in \eqref{non-commutation_2}. 
We choose a natural basis for these multi-indices by odering the letters (derivatives) and we define the forest of these multi-indices in \eqref{SPDE_forest}. In Proposition~\ref{derivation_basis}, we compute the various derivatives in this basis.
We finish the section by introducing
 symmetry factors for SPDE multi-indices $ \eqref{symmetry_SPDE} $ and for forests of SPDE multi-indices  \eqref{symmetry_SPDE_forest} and an inner product \eqref{inner_product_SPDE}.

In Section \ref{Sec::5}, we follow the same steps as in Section~\ref{Sec::3} for defining the various products and coproducts. These objects can be seen as a generalisation of those used for ODEs. Therefore, we have decided to use the same notation. We start by extending the product $ \star_2 $ in \eqref{definition_star_2_spde}. Then, we derive an explicit formula for $ \Delta $ with the derivatives $ D^{(\bn)} $ and $\partial^k  $ in Proposition~\ref{LOT_SPDE}. With explicit formulae of their adjoints in Proposition~\ref{adjoint_SPDEs}, we propose  in Theorem~\ref{LOT_SPDE_A} another formula for $ \Delta $. For the coproduct $ \Delta^{\!-} $, we proceed the same by generalising $ \star_1  $ in \eqref{insertion_multiple_SPDE} and providing an explicit formula in Proposition~\ref{extraction_SPDE_1}. Alternative formula is given in Theorem~\ref{extraction_SPDE_A} thanks to Proposition~\ref{adjoint_SPDEs}.

\subsection*{Acknowledgements}

{\small
	Y. B. and Y. H. thank Dominique Manchon for communicating his recent result on the Butcher-Connes-Kreimer multi-indices coproduct which was an inspiration for this paper.
  Y. B. gratefully acknowledges funding support from the European Research Council (ERC) through the ERC Starting Grant Low Regularity Dynamics via Decorated Trees (LoRDeT), grant agreement No.\ 101075208. 
} 

\section{ODE multi-indices}
\label{Sec::2}
A general $n$-dimensional ordinary differential equation is in the form of
\begin{equs}
dy=f(y)dt, \quad y(0)=y_0 \in \mathbb{R}^n,
\end{equs}
where $f \in \mathcal{C}^{\infty}$ is a smooth function. Butcher-series (or $B$-series) initially introduced by Butcher \cite{Butcher72} is an expansion of the solution $y$. The main idea of $B$-series is iteratively applying Taylor expansion to the solution $y$. One of the main components of a $B$-series are elementary differentials which are monomials of derivatives in the Taylor coefficients. When one is in dimension one, the elementary differentials have the following general form:
\begin{equs} \label{elementary_differential_ODE}
		F_f[z^\beta](y) = \prod_{k \in \mathbb{N}}\left( f^{(k)}(y)\right)^{\beta(k)} 
\end{equs}
in which $f^{(k)}$ is the $k$-th derivative of $f$ with respect to $y$, and $\beta$ satisfies the \emph{population} condition:
\begin{equation} \label{population_ODE}
[\beta] :=	\sum_{k \in \mathbb{N}} (1 - k)\beta(k)  = 1.
\end{equation}
Hence,
\begin{equs}
	z^{\beta} : = \prod_{k \in \mathbb{N}} z_k^{\beta(k)}
\end{equs}
 with abstract variables $(z_k)_{k \in \mathbb{N}}$ representing $f^{(k)}$ could be used as an appropriate and concise algebraic expression of the solution of an $1$-dimensional ODE
\begin{equation}
	\label{eq:ivp}
	dy=f(y)dt, \quad y(0)=y_0 \in \mathbb{R}.
\end{equation}
We call this combinatorial variable $z^{\beta}$ an ODE multi-indice. Since we are only interested in multi-indices appearing in the expansion of the solution $y$, we would focus only on \emph{populated} ODE multi-indices $z^\beta$ in which $\beta$ satisfies the population condition \eqref{population_ODE}. In the sequel, $\CM$ denotes the set of all populated ODE multi-indices. In the algebra of ODE multi-indices, there is a free Novikov product $ \triangleright $ 
\begin{equs}
	z^{\beta'}\triangleright  z^{\beta} := z^{\beta'}D(z^{\beta}),
\end{equs}
where $D$ is the derivation given by
\begin{equs} \label{derivation_ODE}
	D = \sum_{k \in \mathbb{N}} z_{k+1} \partial_{z_k}.  
\end{equs}
Here, $ \partial_{z_k} $ is the partial derivative in the coordinate $ z_k $. Correspondingly, elementary differentials have the morphism property
\begin{equs}
	F_f[z^{\beta'}\triangleright  z^{\beta}] = F_f[z^{\beta'}]\partial_yF_f[z^{\beta}].
\end{equs}
The product $ \triangleright $ can be extended via the Guin-Oudom procedure \cite{GD}\cite{Guin1} to a product which can be further through the deshuffle corpoduct converted to an associative Grossman-Larson product in the symmetric space over populated multi-indices. This vector space is formed of forests of multi-indices which are collections of multi-indices without any order among them. The forest product $\tilde{\prod}_{i=1}^m z^{\beta_i}$ (or $z^\beta \odot z^\alpha$) is the juxtaposition of multi-indices. The identity element of the forest product is the empty multi-index $z^{\mathbf{0}}$ which is the multi-indice with $\beta(k) =0$ for every $k \in \mathbb{N}$. In the sequel, $\CF$ denotes the set consisting of all forests of multi-indices. 

Norms and symmetry factors are two important measures of multi-indices. We would use the following notations and formulae for them.
\begin{itemize}
	\item The norm of a multi-indice $z^\beta$ is the sum of each element of $\beta$
	\begin{equs}
		|z^\beta| = \sum_{k \in \mathbb{N}} \beta(k).
	\end{equs}
	\item The symmetry factor of a multi-indice counts the total number of possible ways to permute $k$ children of each node $z_k$
	\begin{equs} \label{symmetry_factor_1}
		S(z^{\beta}) = \prod_{k \in \mathbb{N}} (k!)^{\beta(k)}
	\end{equs}
\end{itemize}
The above-mentioned measures can be extended and applied to a forest of multi-indices. Then the norms are
\begin{equs}
	\left|\tilde{\prod}_{i=1}^m z^{\beta_i}\right| = \sum_{i=1}^m |z^{\beta_i}|.
\end{equs}
The symmetry factor for a forest of multi-indices $\tilde{z}^{\tilde{\beta}} =\tilde{\prod}_{i=1}^m \left(z^{\beta_i}\right)^{r_i}$	with disjoint $\beta_i$ is
\begin{equs}  \label{symmetry_factor_2}
	S\left(\tilde{z}^{\tilde{\beta}}\right) = \prod_{i=1}^m r_i!\left(S(z^{\beta_i})\right)^{r_i}.
\end{equs} 
In the sequel, we will also use the following symmetry factor:
\begin{equs}
		S_{\tiny{\text{ext}}}( \tilde{\prod}_{i=1}^n z^{\beta_i}) = 	\frac{	S( \tilde{\prod}_{i=1}^n z^{\beta_i}) }{S( \prod_{i=1}^n z^{\beta_i})}.
\end{equs}
For two forests of multi-indices $\tilde{z}^{\tilde{\alpha}}$ and  $\tilde{z}^{\tilde{\beta}}$, the pairing is defined via the inner product
\begin{equs} \label{inner_product}
	<\tilde{z}^{\tilde{\alpha}}, \tilde{z}^{\tilde{\beta}}> := S(\tilde{z}^{\tilde{\alpha}})\delta^{\tilde{\alpha}}_{\tilde{\beta}},
\end{equs}
where $\delta^{\tilde{\alpha}}_{\tilde{\beta}} = 1$, if ${\tilde{\alpha}}={\tilde{\beta}}$, otherwise it equals $0$.

\section{Explicit formulae for ODEs multi-indices coproducts}
\label{Sec::3}

There are two coproducts in the context of renormalisation through Hopf algebras of rooted trees, namely Butcher-Connes-Kreimer coproduct which is the dual of the Grossman-Larson product and extraction-contraction coproduct which is the dual of the insertion of trees. They play important roles
especially in the composition and substitution laws of $B$-series (see \cite{CHV07} for the rooted tree version and \cite{BEH24} for the multi-indices version). For rooted trees, the Butcher-Connes-Kreimer coproduct was introduced by Connes and Kreimer \cite{CK} and Calaque, Ebrahimi-Fard and Manchon gave the formula for the extraction-contraction coproduct \cite{CA}. Recently, Zhu, Gao and Manchon found the explicit formula of Butcher-Connes-Kreimer type coproduct of multi-indices Hopf algebra \cite{GMZ24}. This coproduct is called Linares–Otto–Tempelmayr coproduct named after the authors who initially introduced multi-indices in the context of singular SPDEs \cite{LOT}. 
In this section we would give an explicit formula of the Linares–Otto–Tempelmayr coproduct which is equivalent to the one in \cite{GMZ24} but derived without rooted trees. The pairing used here is also different from \cite{GMZ24}. The previously unknown explicit formula of the extraction-contraction coproduct for multi-indices will also be introduced.

The Linares–Otto–Tempelmayr coproduct denoted by $ \Delta $ in  \cite{LOT} is the dual of the product $ \star_2$ defined on forests of multi-indices. It is given by
\begin{equs} \label{definition_star_2}
	\tilde{\prod}_{i=1}^n z^{\beta_i} \star_2 z^{\beta} = \prod_{i=1}^n z^{\beta_i} D^n z^{\beta}.
\end{equs}
It is also possible to graft the empty multi-index $z^\mathbf{0}$ to another multi-indice and graft some forest of multi-indices to $z^\mathbf{0}$. We will use the following convention 
\begin{equs} \label{convention}
	z^\mathbf{0} \star_2 z^{\beta} =  z^{\beta}, \text{ and} \quad	
	\tilde{\prod}_{i=1}^n z^{\beta_i} \star_2 z^\mathbf{0} = \tilde{\prod}_{i=1}^n z^{\beta_i}
\end{equs}
Since the  Linares–Otto–Tempelmayr coproduct is the adjoint of the product $\star_2$, by the duality and pairing one has
\begin{equs} \label{dual_Delta}
\langle	\tilde{\prod}_{i=1}^n z^{\beta_i} \star_2 z^{\bar{\beta}}, z^{\beta} \rangle 
= \langle	\tilde{\prod}_{i=1}^n z^{\beta_i} \otimes z^{\bar{\beta}}, \Delta z^{\beta} \rangle. 
\end{equs}
Using the fact that \eqref{definition_star_2} is quite explicit, we want to derive an explicit formula for the coproduct $ \Delta $ through the equality \eqref{dual_Delta}.

\begin{proposition} \label{LOT_coproduct}
	The explicit formula of Linares–Otto–Tempelmayr coproduct is 
	\begin{equs} \label{explicit_coproduct_BCK_1}
		\begin{aligned}
			&\Delta z^{\beta}  = z^{\mathbf{0}} \otimes z^\beta + z^\beta \otimes z^{\mathbf{0}}
			\\& + 
			\sum_{\substack{{\beta  = \beta_1 + \cdots + \beta_n + \hat{\beta}} \\ n \in \mathbb{N}^*}}
			\sum_{z^{\bar\beta} \in \CM}
			\frac{S(z^{\beta})}{	S( \tilde{\prod}_{i=1}^n z^{\beta_i}) S(z^{\bar{\beta}})}  \frac{\langle D^{n} z^{\bar{\beta}}, z^{\hat\beta} \rangle}{S(z^{\hat\beta})}  \tilde{\prod}_{i=1}^n z^{\beta_i}    \otimes  z^{\bar{\beta}}
		\end{aligned}		 
	\end{equs}
	where the $\beta_i$ and $ \bar{\beta} $ satisfy the population condition. 
	The sum over the $\beta_i$ and $  \hat{\beta}$ does not count the order among $\beta_i$, which means $\beta  = \beta_1 + \cdots + \beta_n + \hat{\beta}$ is a partition over $\beta$. 
\end{proposition}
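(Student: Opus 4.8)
The plan is to read off $\Delta z^{\beta}$ coefficient by coefficient from the defining duality \eqref{dual_Delta}, using that the pairing \eqref{inner_product} is non-degenerate, and then to evaluate each coefficient by plugging in the explicit formula \eqref{definition_star_2} for $\star_2$. Writing $\Delta z^{\beta} = \sum c_{\tilde\gamma,\bar\gamma}\,\tilde z^{\tilde\gamma}\otimes z^{\bar\gamma}$ over forests $\tilde z^{\tilde\gamma}\in\CF$ and multi-indices $z^{\bar\gamma}$ (the empty multi-index $z^{\mathbf 0}$ allowed on either side), and using that the pairing is diagonal in this basis, with $\langle\tilde z^{\tilde\gamma},\tilde z^{\tilde\gamma}\rangle = S(\tilde z^{\tilde\gamma})$ and $\langle z^{\bar\gamma},z^{\bar\gamma}\rangle = S(z^{\bar\gamma})$, and that the tensor pairing is $\langle a\otimes b,c\otimes d\rangle = \langle a,c\rangle\langle b,d\rangle$, the relation \eqref{dual_Delta} gives directly
\begin{equs}
	c_{\tilde\gamma,\bar\gamma} = \frac{\langle \tilde z^{\tilde\gamma}\star_2 z^{\bar\gamma},\, z^{\beta}\rangle}{S(\tilde z^{\tilde\gamma})\,S(z^{\bar\gamma})}.
\end{equs}
This is a finite sum, and hence $\Delta z^{\beta}$ is well defined as the adjoint of $\star_2$: the norm $|\cdot|$ is additive for the forest product and is preserved by $D$, so $|\tilde z^{\tilde\gamma}\star_2 z^{\bar\gamma}| = \sum_i|\gamma_i| + |\bar\gamma|$, and only the finitely many pairs with this equal to $|\beta|$ can contribute.

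Next I would compute the numerator, which is the core of the argument. With $\tilde z^{\tilde\gamma} = \tilde\prod_{i=1}^n z^{\gamma_i}$, formula \eqref{definition_star_2} gives $\tilde z^{\tilde\gamma}\star_2 z^{\bar\gamma} = z^{\gamma_1+\cdots+\gamma_n}\,D^n z^{\bar\gamma}$, a linear combination of multi-indices of the form $z^{\gamma_1+\cdots+\gamma_n+\mu}$ with $z^{\mu}$ occurring in $D^n z^{\bar\gamma}$. Pairing against $z^{\beta}$ keeps only the term with $\mu = \hat\beta := \beta - (\gamma_1+\cdots+\gamma_n)$, and only when $\hat\beta$ is a genuine (componentwise non-negative) multi-index. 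Rewriting the coefficient of $z^{\hat\beta}$ in $D^n z^{\bar\gamma}$ back as a pairing through $\langle z^{\mu},z^{\hat\beta}\rangle = S(z^{\hat\beta})\delta^{\mu}_{\hat\beta}$, and using the multiplicativity $S(z^{\gamma_1+\cdots+\gamma_n+\hat\beta}) = S(z^{\beta})$, one obtains
\begin{equs}
	\langle \tilde z^{\tilde\gamma}\star_2 z^{\bar\gamma},\, z^{\beta}\rangle = \frac{S(z^{\beta})}{S(z^{\hat\beta})}\,\langle D^n z^{\bar\gamma},\, z^{\hat\beta}\rangle ;
\end{equs}
dividing by $S(\tilde z^{\tilde\gamma})S(z^{\bar\gamma})$ then reproduces exactly the coefficient in \eqref{explicit_coproduct_BCK_1}.

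It remains to repackage the index set and to account for the degenerate terms. Since forests are unordered, specifying a forest $\tilde\prod_{i=1}^n z^{\beta_i}$ of populated multi-indices (the $\beta_i$ populated because $\CF$ is built from populated multi-indices, $\bar\beta$ populated because $z^{\bar\beta}\in\CM$) together with the remainder $\hat\beta$ is the same datum as an order-blind decomposition $\beta = \beta_1+\cdots+\beta_n+\hat\beta$ with $n\geq 1$; and $S(\tilde\prod_{i=1}^n z^{\beta_i})$ as defined in \eqref{symmetry_factor_2} supplies precisely the repetition factors $r_i!$ making each such multiset count once, so the two ways of indexing agree. The two primitive summands come from the degenerate cases handled by the convention \eqref{convention}: the empty forest ($n=0$) forces $z^{\bar\gamma}=z^{\beta}$ and gives $z^{\mathbf 0}\otimes z^{\beta}$, while $z^{\bar\gamma}=z^{\mathbf 0}$ forces, via $\langle\tilde z^{\tilde\gamma},z^{\beta}\rangle\neq 0$, $n=1$ and $\gamma_1=\beta$, giving $z^{\beta}\otimes z^{\mathbf 0}$. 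Nothing is double-counted with the generic range, because $\hat\beta=\mathbf 0$ together with $n\geq 1$ and $\bar\beta$ populated would require $z^{\mathbf 0}$ to occur in $D^n z^{\bar\beta}$, which is impossible as $D$ preserves the norm.

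The one place calling for care is this last repackaging: I would need to verify that the passage from ``sum over forests $\tilde z^{\tilde\gamma}\in\CF$'' to ``sum over unordered decompositions $\beta=\beta_1+\cdots+\beta_n+\hat\beta$'' is exactly a bijection of index sets once the repetition factors inside $S(\tilde\prod_i z^{\beta_i})$ are taken into account, and that the two primitive terms are precisely the contributions lying outside the generic range $n\geq 1$, $\beta_i,\bar\beta$ populated. Everything else — the finiteness, the non-degeneracy argument, and the coefficient computation itself — is mechanical once the pairing and \eqref{definition_star_2} are in place.
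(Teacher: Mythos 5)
Your proposal is correct and follows essentially the same route as the paper's proof: both determine the coefficients of $\Delta z^{\beta}$ by duality against \eqref{dual_Delta} and evaluate $\langle \tilde{\prod}_{i=1}^n z^{\beta_i} \star_2 z^{\bar{\beta}}, z^{\beta}\rangle$ explicitly using \eqref{definition_star_2} and the multiplicativity of $S$. Your additional checks (finiteness via norm preservation under $D$, the bijection between forests and unordered decompositions, and the non-overlap of the primitive terms with the generic range) are sound refinements of points the paper treats more briefly.
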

\begin{proof}
	There is no order among multi-indices $\beta_i$ ensuring that $\tilde{\prod}_{i=1}^n z^{\beta_i}$ form a forest without repetition.
	Since the primitive parts of the coproduct and the $\sum_{\beta  = \beta_1 + \cdots + \beta_n + \hat{\beta}}$ go through all possible partitions of $\beta$, the form in \eqref{explicit_coproduct_BCK_1} covers all forests $\tilde{\prod}_{i=1}^n z^{\beta_i}$ and populated multi-indices $z^{\bar{\beta}}$ such that their $\star_2$ product has the term formed by $z^\beta$. Therefore, the proof is equivalent to show that each term in \eqref{explicit_coproduct_BCK_1} satisfies the adjoint identity \eqref{dual_Delta}. The equality for primitive element $z^{\mathbf{0}} \otimes z^\beta + z^\beta \otimes z^{\mathbf{0}} $ is immediate from the convention \eqref{convention}.
	We then deal with the non-primitive terms. From left-hand side of \eqref{dual_Delta}, 
	one can observe that the product can be rewritten as a sum over all non-populated multi-indices $z^{\check{\beta}}$:
	\begin{equs}
		\tilde{\prod}_{i=1}^n z^{\beta_i} \star z^{\bar{\beta}}  & = \prod_{i=1}^n z^{\beta_i} D^{n} z^{\bar{\beta}} \\
		& = \sum_{z^\beta \in \CM} \frac{\langle \prod_{i=1}^n z^{\beta_i} D^{n} z^{\bar{\beta}}, z^{{\beta}}  \rangle}{S(z^{{\beta}})} z^{{\beta}}
		\\
		& = \prod_{i=1}^n z^{\beta_i} \sum_{z^{\check{\beta}}}  \frac{\langle D^{n} z^{\bar{\beta}}, z^{\check{\beta}} \rangle}{S(\check{\beta})} z^{\check{\beta}}
	\end{equs} 	
	where from the second line to the third line, we have used the fact that the symmetry factor $S$ is multiplicative in the sense that one has
	\begin{equs}
		S(z^{\beta_i} z^{\bar{\beta}}) = 	S(z^{\beta_i}) S(z^{\bar{\beta}}).
	\end{equs}
	Then, one has
	\begin{equs}
		& \langle \tilde{\prod}_{i=1}^n z^{\beta_i} \star_2 z^{\bar{\beta}} ,	 z^{\beta} \rangle
		= S(z^{\beta})  \frac{\langle D^{n} z^{\bar{\beta}}, z^{\beta-\sum_{i=1}^n \beta_i} \rangle}{S(z^{\beta-\sum_{i=1}^n \beta_i})}.
	\end{equs}
	On the other hand, the coproduct can be viewed as the sum of ``admissible cuts" over the multi-index $z^\beta$
	\begin{equs}
		\Delta z^{\beta} &= 	z^{\mathbf{0}} \otimes z^\beta + z^\beta \otimes z^{\mathbf{0}}
		\\&+
		\sum_{\substack{{\beta  = \beta_1 + \cdots + \beta_n + \hat{\beta}} \\ n \in \mathbb{N}^*}}
		 \sum_{z^{\bar\beta} \in \CM}
		C(\tilde{\prod}_{i=1}^n z^{\beta_i},z^{\bar{\beta}},z^{\beta}) \tilde{\prod}_{i=1}^n z^{\beta_i} \otimes z^{\bar{\beta}}.
	\end{equs}
	Now, we can use duality to compute these coefficients $C$. From
	\begin{equs}
		\langle \tilde{\prod}_{i=1}^n z^{\beta_i} \otimes z^{\bar{\beta}} ,	\Delta z^{\beta} \rangle = S( \tilde{\prod}_{i=1}^n z^{\beta_i}) S(z^{\bar{\beta}})  C(\tilde{\prod}_{i=1}^n z^{\beta_i},z^{\bar{\beta}},z^{\beta}).
	\end{equs}
	one can deduce that
	\begin{equs}
		S( \tilde{\prod}_{i=1}^n z^{\beta_i}) S(z^{\bar{\beta}})  C(\tilde{\prod}_{i=1}^n z^{\beta_i},z^{\bar{\beta}},z^{\beta}) =
		S(z^{\beta})  \frac{\langle D^{n} z^{\bar{\beta}}, z^{\beta-\sum_{i=1}^n \beta_i} \rangle}{S(z^{\beta-\sum_{i=1}^n \beta_i})}
	\end{equs}
	which implies
	\begin{equs}
		C(\tilde{\prod}_{i=1}^n z^{\beta_i},z^{\bar{\beta}},z^{\beta}) & =
		\frac{S(z^{\beta})}{	S( \tilde{\prod}_{i=1}^n z^{\beta_i}) S(z^{\bar{\beta}})}  \frac{\langle D^{n} z^{\bar{\beta}}, z^{\hat\beta} \rangle}{S(z^{\hat\beta})}.
	\end{equs}
	Recall that $ \hat{\beta} = \beta - \sum_{i=1}^n \beta_i$. 
\end{proof}
The following example illustrates how to compute the Linares–Otto–Tempelmayr coproduct $\Delta$.
\begin{example} \label{example_1}
	We consider the multi-indice $z^\beta = z_0^2z_1z_2$. The first step is to partition $z^\beta$ into $n$ populated multi-indices $\beta_i$ and a non-populated multi-indice $\hat\beta$. For this specific $z^\beta$ we have 5 possibilities shown in the following table in which $\odot$ denotes the forest product of multi-indices.
	\begin{table}[H]
		\centering
		\begin{tabular}{||c c c c||} 
			\hline
			Partition $(j)$ & $n$& $\tilde\prod_{i=1}^nz^{\beta_i}$ & $z^{\hat{\beta}_j}$ \\ [0.5ex] 
			\hline\hline
			1 & 1& $z_0$  & $z_0z_1z_2$ \\[1ex] 
			\hline
			2 & 1& $z_0z_1$  & $z_0z_2$ \\ [1ex]
			\hline
			3 & 1& $z_0^2z_2$ & $z_1$ \\ [1ex]
			\hline
			4 & 2& $z_0 \odot z_0$   & $z_1z_2$ \\ [1ex]
			\hline
			5 & 2& $z_0\odot z_0z_1$  & $z_2$ \\ [1ex] 
			\hline
		\end{tabular}
	\end{table}
	Then, we need to find all the $\bar\beta$ such that we can get $\hat\beta$ after applying $D^n$ to $\bar\beta$ and to compute the coefficient $\frac{\langle D^{n} z^{\bar{\beta}}, z^{\hat\beta} \rangle}{S(z^{\hat\beta})}$. For $z^{\hat{\beta}_1}$, we have two possible populated multi-indices $z^{\bar\beta}$ which are $z^{\bar\beta_{1,1}} = z_0^2z_2$ and $z^{\bar\beta_{1,2}} = z_0z_1^2$. Here we take $z^{\bar\beta_{1,1}}$ as an example showing how to compute the coefficient:
	\begin{equs}
		Dz^{\bar\beta_{1,1}} = 2z_0z_1z_2 + z_0^2z_3.
	\end{equs}
	Therefore,
	\begin{equs}
		\frac{\langle D^{n} z^{\bar{\beta}_{1,1}}, z^{\hat\beta_1} \rangle}{S(z^{\hat\beta_1})} = 2.
	\end{equs}
	The computation would be the same for other $z^{\hat\beta_j}$ and the result is shown below.
	\begin{table}[H]
		\centering
		\begin{tabular}{||c c c c c||} 
			\hline
			Partition $(j)$ & $n$& $\tilde\prod_{i=1}^nz^{\beta_i}$ & $z^{\hat{\beta}_j}$& $\frac{\langle D^{n} z^{\bar{\beta}_j}, z^{\hat\beta_j} \rangle}{S(z^{\hat\beta_j})}z^{\bar\beta_j} $ \\ [0.5ex] 
			\hline\hline
			1 & 1& $z_0$  & $z_0z_1z_2$& $2z_0^2z_2$, $2z_0z_1^2$\\[1ex] 
			\hline
			2 & 1& $z_0z_1$  & $z_0z_2$& $z_0z_1$\\ [1ex]
			\hline
			3 & 1& $z_0^2z_2$ & $z_1$& $z_0$\\ [1ex]
			\hline
			4 & 2& $z_0 \odot z_0$   & $z_1z_2$& $3 z_0z_1$\\ [1ex]
			\hline
			5 & 2& $z_0\odot z_0z_1$  & $z_2$& $z_0$\\ [1ex] 
			\hline
		\end{tabular}
	\end{table}
	Finally we have
	\begin{equs}
		\Delta(z^\beta) &= z^{\mathbf{0}} \otimes  z_0^2z_1z_2 +  z_0^2z_1z_2 \otimes z^{\mathbf{0}} + 2z_0 \otimes z_0^2z_2 + 4z_0 \otimes z_0z_1^2
		\\&+ 2 z_0z_1 \otimes z_0z_1 + z_0^2z_2 \otimes z_0 
		+ 3z_0 \odot z_0 \otimes z_0z_1 + 2 z_0\odot z_0z_1 \otimes z_0.
	\end{equs}
\end{example}

In \cite{GMZ24}, the authors obatined a similar explicit formula by guesssing it and comparing it with the explicit expression of the Butcher-Connes-Kreimer coproduct for forests. We took a different path by using the product $ \star_2 $ whose explicit expression is known and it is quite easy to compute via the derivative $D$. In order to be closer to the formula proposed in \cite{GMZ24}, one has to compute the adjoint $ \bar{D} $ of $D$ which is defined by
\begin{equs}
	\langle D z^{\bar{\beta}}, z^{\hat\beta} \rangle = \langle  z^{\bar{\beta}}, \bar{D} z^{\hat\beta} \rangle.
\end{equs}
Its explicit expression is the subject of the next proposition

\begin{proposition} \label{bar_D}
	An explicit expression for the adjoint   $  \bar{D}$ is given by
	\begin{equs} \label{definition_adjoint}
		\bar{D} z^{\hat{\beta}} = \sum_{k \in \mathbb{N}^*} 	 k\frac{\hat{\beta}(k-1)+1}{ \hat{\beta}(k)} 	 z_{k-1} \partial_{z_k} z^{\hat{\beta}}.
	\end{equs}
\end{proposition}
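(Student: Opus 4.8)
The plan is to compute $\bar{D}$ directly from its defining relation $\langle D z^{\bar\beta}, z^{\hat\beta}\rangle = \langle z^{\bar\beta}, \bar{D} z^{\hat\beta}\rangle$ by making both sides explicit in the monomial basis. First I would apply the derivation $D = \sum_{k} z_{k+1}\partial_{z_k}$ to a generic monomial $z^{\bar\beta}$: since $\partial_{z_k} z^{\bar\beta} = \bar\beta(k) z_k^{-1} z^{\bar\beta}$ (formally), one gets
\begin{equs}
	D z^{\bar\beta} = \sum_{k \in \mathbb{N}} \bar\beta(k)\, z^{\bar\beta - e_k + e_{k+1}},
\end{equs}
where $e_k$ is the indicator of $k$. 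Thus $D$ moves one unit of weight from slot $k$ to slot $k+1$ with multiplicity $\bar\beta(k)$. Consequently, $\langle D z^{\bar\beta}, z^{\hat\beta}\rangle$ is nonzero only when $\hat\beta = \bar\beta - e_k + e_{k+1}$ for some $k$, i.e. when $\bar\beta = \hat\beta + e_k - e_{k+1} = \hat\beta - e_{(k+1)} + e_{k}$; relabelling $k+1 \rightsquigarrow k$, the relevant $\bar\beta$ are exactly $\hat\beta - e_k + e_{k-1}$ for $k \in \mathbb{N}^*$ with $\hat\beta(k) \geq 1$. For such a pair, the coefficient in $D z^{\bar\beta}$ is $\bar\beta(k-1) = \hat\beta(k-1) + 1$, and using the inner product $\langle z^{\gamma}, z^{\gamma}\rangle = S(z^{\gamma}) = \prod_j (j!)^{\gamma(j)}$ we get
\begin{equs}
	\langle D z^{\bar\beta}, z^{\hat\beta}\rangle = (\hat\beta(k-1)+1)\, S(z^{\hat\beta}).
\end{equs}

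Next I would pair this against the proposed formula for $\bar{D}$. Applying $z_{k-1}\partial_{z_k}$ to $z^{\hat\beta}$ gives $\hat\beta(k)\, z^{\hat\beta - e_k + e_{k-1}}$, so
\begin{equs}
	\bar{D} z^{\hat\beta} = \sum_{k \in \mathbb{N}^*} k\, \frac{\hat\beta(k-1)+1}{\hat\beta(k)}\, \hat\beta(k)\, z^{\hat\beta - e_k + e_{k-1}} = \sum_{k\in\mathbb{N}^*} k\,(\hat\beta(k-1)+1)\, z^{\hat\beta - e_k + e_{k-1}},
\end{equs}
where the apparent singularity at $\hat\beta(k)=0$ is harmless since those terms vanish anyway (the term is only present when $\hat\beta(k)\geq 1$). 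Then for $\bar\beta = \hat\beta - e_k + e_{k-1}$ one computes $\langle z^{\bar\beta}, \bar{D} z^{\hat\beta}\rangle = k(\hat\beta(k-1)+1)\, S(z^{\bar\beta})$, and the ratio $S(z^{\bar\beta})/S(z^{\hat\beta})$ differs only in slots $k-1$ and $k$: one loses a factor $k!$ (from decreasing $\hat\beta(k)$ by one) and gains $(k-1)!$ (from increasing $\hat\beta(k-1)$ by one), giving $S(z^{\bar\beta}) = S(z^{\hat\beta})\cdot \frac{(k-1)!}{k!} = S(z^{\hat\beta})/k$. Hence $\langle z^{\bar\beta}, \bar{D} z^{\hat\beta}\rangle = k(\hat\beta(k-1)+1)\cdot S(z^{\hat\beta})/k = (\hat\beta(k-1)+1) S(z^{\hat\beta})$, matching the first computation. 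Since the two bilinear expressions agree on all basis pairs $(z^{\bar\beta}, z^{\hat\beta})$, the claimed formula is the adjoint.

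I expect the only real subtlety to be bookkeeping: keeping the index shifts $k \leftrightarrow k+1$ straight between the ``$D$ pushes weight up'' picture and the ``$\bar{D}$ pushes weight down'' picture, and correctly tracking how the symmetry factor $S(z^\gamma) = \prod_k (k!)^{\gamma(k)}$ changes under the elementary move $\gamma \mapsto \gamma - e_k + e_{k-1}$ — this is precisely the source of the factor $k = k!/(k-1)!$ in the formula. A minor point worth a sentence is the convention at $\hat\beta(k) = 0$: the factor $\hat\beta(k)$ in $z_{k-1}\partial_{z_k} z^{\hat\beta}$ cancels the formal denominator, so \eqref{definition_adjoint} should be read as the polynomial operator $\sum_{k \geq 1} k(\hat\beta(k-1)+1)\, z_{k-1}\partial_{z_k}$ after cancellation, with no division actually performed. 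One should also remark that $\bar{D}$ maps populated multi-indices to populated ones (since $[\hat\beta - e_k + e_{k-1}] = [\hat\beta] + (1-(k-1)) - (1-k) = [\hat\beta]$), so everything stays within $\CM$ as needed.
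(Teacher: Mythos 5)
Your verification is correct and follows essentially the same route as the paper: both arguments test the adjoint identity on the single basis pair $(z^{\bar\beta},z^{\hat\beta})$ related by the elementary move $e_{k}\mapsto e_{k-1}$, read off the multiplicity $\bar\beta(k-1)=\hat\beta(k-1)+1$ from $D$, and extract the factor $k$ from the ratio $S(z^{\hat\beta})/S(z^{\bar\beta})=k!/(k-1)!$; the paper solves for the unknown coefficient $K_{(\hat\beta,k)}$ while you verify the stated one, which is the same computation. One correction to your closing aside: $\bar D$ does \emph{not} preserve the population index, since for $\gamma'=\gamma-e_k+e_{k-1}$ one has $[\gamma']-[\gamma]=(1-(k-1))-(1-k)=1$, so $\bar D$ raises $[\cdot]$ by one (consistent with $D$ lowering it by one); this is harmless for the proposition itself, and is in fact what is needed in Theorem \ref{alternative_LOT}, where $[\hat\beta]=1-n$ and only $\bar D^{n}z^{\hat\beta}$ lands back in $\CM$.
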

\begin{proof}
	We suppose that $z^{\bar{\beta}}$ and $ z^{\hat{\beta}} $ differ by one $ z_k $ and $ z_{k-1} $:
	\begin{equs}
		\hat{\beta}(k) = \bar{\beta}(k) + 1, \quad 	\hat{\beta}(k-1) = \bar{\beta}(k-1) - 1.
	\end{equs}
	One first has
	\begin{equs}
		\langle D z^{\bar{\beta}}, z^{\hat\beta} \rangle = \bar{\beta}(k-1)S(z^{\hat{\beta}}) = \bar{\beta}(k-1)\frac{k!}{(k-1)!}S(z^{\bar{\beta}}).
	\end{equs}
	Suppose $\bar{D} z^{\hat{\beta}} = \sum_{k \in \mathbb{N}^*} K_{(\hat{\beta},k)}z_{k-1}\partial_{z_k} z^{\hat{\beta}}$ for some constants $K_{\hat{\beta}}$. 
	Then, one has
	\begin{equs}
		\langle  z^{\bar{\beta}}, \bar{D} z^{\hat\beta} \rangle = \hat{\beta}(k)  K_{(\hat{\beta},k)} S(z^{\bar{\beta}}).
	\end{equs}
	Thus,
	\begin{equs}
		K_{(\hat{\beta},k)} = k\frac{\bar{\beta}(k-1)}{\hat{\beta}(k)} = k\frac{\hat{\beta}(k-1)+1}{\hat{\beta}(k)}.
	\end{equs}
\end{proof}
\begin{example}
	We will take $z^{\hat\beta} = z_0z_1^2z_2$ as an example to show how $\bar D$ works.
	\begin{equs}
		\bar D z^{\hat\beta} = (1\frac{1+1}{2})2z_0^2z_1z_2 + (2\frac{2+1}{1})z_0z_1^3 = 2z_0^2z_1z_2 + 6z_0z_1^3
	\end{equs}
	Let $\bar\beta_1 = z_0^2z_1z_2$	and $\bar\beta_2 = z_0z_1^3$. Then, we have 
	\begin{equs}
		\langle z^{\bar\beta_1}, \bar D z^{\hat\beta} \rangle = 4, \quad \langle z^{\bar\beta_2}, \bar D z^{\hat\beta} \rangle = 6.
	\end{equs}
	One can also check that
	\begin{equs}
		\langle D z^{\bar{\beta}_1}, z^{\hat\beta} \rangle = 4, \quad \langle D z^{\bar{\beta}_2}, z^{\hat\beta} \rangle = 6.
	\end{equs}
\end{example}

\begin{theorem} \label{alternative_LOT} An alternative explicit formula to \eqref{explicit_coproduct_BCK_1} of Linares–Otto–Tempelmayr coproduct is 
	\begin{equs} \label{explicit_coproduct_BCK_2}
		\Delta z^{\beta} &= 
		z^{\mathbf{0}} \otimes z^\beta + z^\beta \otimes z^{\mathbf{0}}
		\\&+
		\sum_{\substack{{\beta  = \beta_1 + \cdots + \beta_n + \hat{\beta}} \\ n \in \mathbb{N}^*}} \frac{1}{S_{\tiny{\text{ext}}}( \tilde{\prod}_{i=1}^n z^{\beta_i})}
		\tilde{\prod}_{i=1}^n z^{\beta_i}    \otimes  \bar{D}^{n} z^{\hat{\beta}},		 
	\end{equs}
	where the $\beta_i$ and $ \bar{\beta} $ satisfy the population condition. The sum $\sum_{\beta  = \beta_1 + \cdots + \beta_n + \hat{\beta}}$ does not count the order among $\beta_i$, which means $\beta  = \beta_1 + \cdots + \beta_n + \hat{\beta}$ is a partition over $\beta$.
\end{theorem}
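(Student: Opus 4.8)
The plan is to start from formula~\eqref{explicit_coproduct_BCK_1} of Proposition~\ref{LOT_coproduct} and carry out the inner summation over $z^{\bar\beta}\in\CM$ in closed form, trading $D^{n}$ for its adjoint $\bar D^{n}$. Iterating the defining relation of $\bar D$ from Proposition~\ref{bar_D} gives $\langle D^{n}z^{\bar\beta},z^{\hat\beta}\rangle=\langle z^{\bar\beta},\bar D^{n}z^{\hat\beta}\rangle$, so the contribution of a fixed partition $\beta=\beta_1+\cdots+\beta_n+\hat\beta$ to $\Delta z^{\beta}$ equals
\begin{equs}
\frac{S(z^{\beta})}{S(\tilde{\prod}_{i=1}^{n}z^{\beta_i})\,S(z^{\hat\beta})}\;\tilde{\prod}_{i=1}^{n}z^{\beta_i}\;\otimes\;\sum_{z^{\bar\beta}\in\CM}\frac{\langle z^{\bar\beta},\bar D^{n}z^{\hat\beta}\rangle}{S(z^{\bar\beta})}\,z^{\bar\beta}.
\end{equs}

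First I would note that, by the very definition~\eqref{inner_product}, the family $(z^{\bar\beta})_{z^{\bar\beta}\in\CM}$ is orthogonal with $\langle z^{\bar\beta},z^{\bar\beta}\rangle=S(z^{\bar\beta})$; hence any element $x$ lying in the span of populated multi-indices admits the reproducing expansion $x=\sum_{z^{\bar\beta}\in\CM}\frac{\langle z^{\bar\beta},x\rangle}{S(z^{\bar\beta})}\,z^{\bar\beta}$. To apply this with $x=\bar D^{n}z^{\hat\beta}$ I must check that $\bar D^{n}z^{\hat\beta}$ is supported on $\CM$. This is a population count: acting by $D$ in~\eqref{derivation_ODE} replaces some $z_k$ by $z_{k+1}$, which changes $[\,\cdot\,]$ from~\eqref{population_ODE} by $(k-1)+(-k)=-1$, so $D$ lowers $[\,\cdot\,]$ by $1$ and dually $\bar D$ raises it by $1$. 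Since $[\beta_i]=1$ for each $i$ and $[\,\cdot\,]$ is additive along the partition, $[\hat\beta]=1-n$, whence $[\bar D^{n}z^{\hat\beta}]=1$ and every monomial occurring in $\bar D^{n}z^{\hat\beta}$ is populated. (The apparent denominators $\hat\beta(k)$ in~\eqref{definition_adjoint} are inert: they only ever multiply $\partial_{z_k}z^{\hat\beta}$, which vanishes when $\hat\beta(k)=0$, and the sum runs over $k\in\mathbb N^{*}$, so no variable $z_{-1}$ is created.) Therefore the inner sum collapses exactly to $\bar D^{n}z^{\hat\beta}$.

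It then remains to identify $\frac{S(z^{\beta})}{S(\tilde{\prod}_{i=1}^{n}z^{\beta_i})\,S(z^{\hat\beta})}$ with $1/S_{\tiny{\text{ext}}}(\tilde{\prod}_{i=1}^{n}z^{\beta_i})$. For this I would use the multiplicativity of $S$ over the ordinary product of monomials, already exploited in the proof of Proposition~\ref{LOT_coproduct}: as $z^{\beta}=\big(\prod_{i=1}^{n}z^{\beta_i}\big)z^{\hat\beta}$, one has $S(z^{\beta})=S\big(\prod_{i=1}^{n}z^{\beta_i}\big)S(z^{\hat\beta})$. Cancelling $S(z^{\hat\beta})$ and recalling $S_{\tiny{\text{ext}}}(\tilde{\prod}_{i=1}^{n}z^{\beta_i})=S(\tilde{\prod}_{i=1}^{n}z^{\beta_i})/S(\prod_{i=1}^{n}z^{\beta_i})$ gives precisely $\tfrac{1}{S_{\tiny{\text{ext}}}(\tilde{\prod}_{i=1}^{n}z^{\beta_i})}$. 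Summing over all partitions and keeping the primitive part $z^{\mathbf{0}}\otimes z^{\beta}+z^{\beta}\otimes z^{\mathbf{0}}$ unchanged yields~\eqref{explicit_coproduct_BCK_2}.

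The step I expect to be the main obstacle is the support claim for $\bar D^{n}z^{\hat\beta}$: one has to be confident that replacing the restricted sum $\sum_{z^{\bar\beta}\in\CM}$ in~\eqref{explicit_coproduct_BCK_1} by the full element $\bar D^{n}z^{\hat\beta}$ neither drops a term (no non-populated component is present, which is the degree lemma above) nor introduces one (no spurious overlap with the primitive part). Once the population-degree behaviour of $D$ and $\bar D$ is pinned down, the remainder is a routine rewriting of Proposition~\ref{LOT_coproduct}.
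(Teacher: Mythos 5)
Your proposal is correct and follows essentially the same route as the paper: pass from $D^{n}$ to its adjoint $\bar D^{n}$ in the coefficient of \eqref{explicit_coproduct_BCK_1} and use the multiplicativity $S(z^{\beta})=\prod_{i=1}^{n}S(z^{\beta_i})\,S(z^{\hat\beta})$ to identify the prefactor with $1/S_{\tiny{\text{ext}}}(\tilde{\prod}_{i=1}^{n}z^{\beta_i})$. The only difference is that you make explicit the population-degree check ($[\hat\beta]=1-n$, $\bar D$ raises $[\,\cdot\,]$ by one) justifying the collapse of $\sum_{z^{\bar\beta}\in\CM}$ onto $\bar D^{n}z^{\hat\beta}$, a point the paper leaves implicit.
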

\begin{proof}
	One can move from \eqref{explicit_coproduct_BCK_1} to \eqref{explicit_coproduct_BCK_2} thanks to the following computation:
	\begin{equs}
		\sum_{z^{\bar\beta} \in \CM}
		\frac{S(z^{\beta})}{	S( \tilde{\prod}_{i=1}^n z^{\beta_i}) S(z^{\bar{\beta}})}  \frac{\langle D^{n} z^{\bar{\beta}}, z^{\hat\beta} \rangle}{S(z^{\hat\beta})} & = 	\sum_{z^{\bar\beta} \in \CM}
		\frac{S(z^{\beta})}{	S( \tilde{\prod}_{i=1}^n z^{\beta_i}) S(z^{\hat{\beta}})}  \frac{\langle  z^{\bar{\beta}}, \bar{D}^n z^{\hat\beta} \rangle}{S(z^{\bar\beta})}
		\\ & = \sum_{z^{\bar\beta} \in \CM} \frac{1}{S_{\tiny{\text{ext}}}( \tilde{\prod}_{i=1}^n z^{\beta_i})}
		\frac{\langle  z^{\bar{\beta}}, \bar{D}^n z^{\hat\beta} \rangle}{S(z^{\bar\beta})}.
	\end{equs}
	where we have used the fact that
	\begin{equs}
		S(z^{\beta}) = \prod_{i=1}^n S(z^{\beta_i}) S(z^{\hat{\beta}}).
	\end{equs}
\end{proof}
\begin{example}
	One can repeat example~\ref{example_1} and check that by using this alternative formula the same result as in the first formula will be recovered. We only give the full computation for $z^{\hat{\beta}_1} = z_0z_1z_2$
	\begin{equation*}
		\bar D z^{\hat{\beta}_1} = 1\frac{1+1}{1}z_0^2z_2 + 2 \frac{1+1}{1} z_0z_1^2 = 2z_0^2z_2 + 4z_0z_1^2.
	\end{equation*}
	Other terms are listed in the following table.
	\begin{table}[H]
		\centering
		\begin{tabular}{||c c c c c||} 
			\hline
			Partition $(j)$ & $n$& $\tilde\prod_{i=1}^nz^{\beta_i}$ & $z^{\hat{\beta}_j}$& $\bar{D}^{n} z^{\hat{\beta}} $ \\ [0.5ex] 
			\hline\hline
			1 & 1& $z_0$  & $z_0z_1z_2$& $2z_0^2z_2+4z_0z_1^2$\\[1ex] 
			\hline
			2 & 1& $z_0z_1$  & $z_0z_2$& $2z_0z_1$\\ [1ex]
			\hline
			3 & 1& $z_0^2z_2$ & $z_1$& $z_0$\\ [1ex]
			\hline
			4 & 2& $z_0 \odot z_0$   & $z_1z_2$& $6 z_0z_1$\\ [1ex]
			\hline
			5 & 2& $z_0\odot z_0z_1$  & $z_2$& $2z_0$\\ [1ex] 
			\hline
		\end{tabular}
	\end{table}Finally we have
	\begin{equs}
		\Delta(z^\beta) &= z^{\mathbf{0}} \otimes  z_0^2z_1z_2 +  z_0^2z_1z_2 \otimes z^{\mathbf{0}} + 2z_0 \otimes z_0^2z_2 + 4z_0 \otimes z_0z_1^2
		\\&+ 2 z_0z_1 \otimes z_0z_1 + z_0^2z_2 \otimes z_0 
		+ 3z_0 \odot z_0 \otimes z_0z_1 + 2 z_0\odot z_0z_1 \otimes z_0.
	\end{equs} 	
\end{example}
\begin{remark} 
	One can observe that the coefficients of $ \bar{D} z^{\hat{\beta}} $  depend on $ \hat{\beta} $  which is not the case for the derivation $D$. Therefore, we have proposed the two formulae \eqref{explicit_coproduct_BCK_1} and \eqref{explicit_coproduct_BCK_2}.
\end{remark}

The extraction-contraction coproduct is the dual of the product $\star_1$ understood as `` simultaneous insertion" which is the Guin–Oudom generalisation of the product $ \blacktriangleright $ defined below. For $z^\beta, z^\alpha \in \CM$ 
\begin{equs} \label{insertion_product}
	z^\beta \blacktriangleright z^\alpha : =\sum_{k \in \mathbb{N}}\left(D^{k}z^{\beta}\right)
	\left(\partial_{z_{k}}z^\alpha\right)
\end{equs}
which represents replacing one single variable $z_k$ by $D^{k}z^{\beta}$. Notice that $z^\mathbf{0}$ is not populated which means $z^{\beta}$ is not allowed to be the empty multi-index. A similar product to \eqref{insertion_product} has been introduced in  \cite[Def. 4.3]{Li23} which is inspired from a product on trees in \cite[Sec. 3.4]{BM22}. Since both $z_k$ and $D^{k}$ have the degree of $k$ and both $z^{\alpha}$ and $z^{\beta}$ are populated, $z^\beta \blacktriangleright z^\alpha$ produces a linear combination of populated multi-indices.
Simultaneous inserting $(z^{\beta_i})_{i=1,\ldots,n} \in \CM$ into $z^\alpha \in \CM$ is achieved through the following product
\begin{equs} \label{multiple_insertion}
	\tilde{\prod}_{i=1}^n z^{\beta_i} \star_1 z^\alpha:= \sum_{k_1,...,k_n\in \mathbb{N}}  
	\left(\prod_{i=1}^nD^{k_i}z^{\beta_i}\right)\left[\left(\prod_{i=1}^n\partial_{z_{k_i}}\right)z^\alpha\right] .
\end{equs}
Same as the convention in \cite{CA} for the simultaneous tree insertion, we have to put the restriction that 
\begin{equation}\label{convention2}
	n = |z^\alpha|,
\end{equation}
which means the number of multi-indices in the forest is the same as the number of variables $z_{\cdot}$ in the trunk multi-indice.
We can also extend this simultaneous insertion from $z^\alpha \in \CM$ to a forest of multi-indices through Leibniz rule on $\left(\prod_{j=1}^n\partial_{z_{k_j}}\right)$.

\begin{proposition} \label{LOT_coproduct2}
	The explicit formula of the extraction-contraction multi-indices coproduct $\Delta^{\!-} $ is 
\begin{equs} \label{explicit_formula_extraction_1}
	\Delta^{\! -} z^{\beta}  =&
	\sum_{\tilde{\prod}_{i=1}^n z^{\beta_i} \in \CF}
	\sum_{\substack{k_1,...,k_n\in \mathbb{N}\\ \prod_{i} z_{k_i} \in \CM}}   	
	E(\tilde{\prod}_{i=1}^nz^{\beta_i}, z^\alpha ,z^{\beta})  
	\tilde{\prod}_{i=1}^n z^{\beta_i}
	\otimes   z^\alpha, \quad\quad	
\end{equs}
with
\begin{equs}
	E(\tilde{\prod}_{i=1}^nz^{\beta_i}, z^\alpha ,z^{\beta}) 
	=   
	\sum_{\beta  = \hat{\beta}_1 + \cdots + \hat{\beta}_n} \frac{\alpha!S(z^\beta)}{S( \tilde{\prod}_{i=1}^n z^{\beta_i})S(z^\alpha)}
	\prod_{i=1}^n
	\frac{\langle  
		D^{k_i}z^{\beta_i}, z^{\hat{\beta}_i} \rangle }{S(z^{\hat{\beta}_i})},
\end{equs}
where $z^\alpha := \prod_{i=1}^n z_{k_i}$ and we have order among $\hat{\beta}_1, \cdots , \hat{\beta}_n$. For example if $\beta = [1,1]$, we need to count both $\beta = [1,0]+[0,1]$ and $\beta = [0,1]+[1,0]$. This is because we have the order among $k_i$.
Note that $ n \ge 1$ as we cannot insert the empty multi-indice.	
\end{proposition}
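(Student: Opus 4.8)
The plan is to rerun the duality argument of Proposition~\ref{LOT_coproduct} with the simultaneous insertion $\star_1$ of \eqref{multiple_insertion} in place of $\star_2$. Since $\Delta^{\!-}$ is the adjoint of $\star_1$ for the pairing \eqref{inner_product}, for every $\tilde{\prod}_{i=1}^n z^{\beta_i} \in \CF$, every $z^\alpha \in \CM$ with $n = |z^\alpha|$ (the convention \eqref{convention2}) and every $z^\beta \in \CM$ one has
\[
\langle \tilde{\prod}_{i=1}^n z^{\beta_i} \star_1 z^\alpha,\, z^\beta \rangle = \langle \tilde{\prod}_{i=1}^n z^{\beta_i} \otimes z^\alpha,\, \Delta^{\!-} z^\beta \rangle ,
\]
and the right-hand side equals $S(\tilde{\prod}_{i=1}^n z^{\beta_i})\, S(z^\alpha)$ times the coefficient of $\tilde{\prod}_{i=1}^n z^{\beta_i} \otimes z^\alpha$ in $\Delta^{\!-} z^\beta$; so it suffices to compute the left-hand side explicitly and match coefficients.

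First I would evaluate the extraction part of \eqref{multiple_insertion}: writing $z^\alpha = \prod_{i=1}^n z_{k_i}$ as in the statement, the factor $(\prod_{i=1}^n \partial_{z_{k_i}}) z^\alpha$ vanishes unless the multiset $\{k_1,\dots,k_n\}$ is the one of $z^\alpha$ — here $n = |z^\alpha|$ is precisely what forces each variable of $z^\alpha$ to be differentiated once — in which case it equals the scalar $\alpha! := \prod_{k\in\mathbb{N}} \alpha(k)!$, since $\partial_{z_k}$ multiplies a multi-indice by the exponent of $z_k$ and lowers it by one. Hence the left-hand side above becomes $\alpha! \sum_{(k_1,\dots,k_n):\, \prod_i z_{k_i} = z^\alpha} \langle \prod_{i=1}^n D^{k_i} z^{\beta_i},\, z^\beta \rangle$. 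Next, expanding each $D^{k_i} z^{\beta_i}$ in the multi-indice basis and using the multiplicativity $S(z^\mu z^\nu) = S(z^\mu) S(z^\nu)$ exactly as in the proof of Proposition~\ref{LOT_coproduct}, each inner product in this sum expands, for the corresponding tuple, as
\[
\langle \prod_{i=1}^n D^{k_i} z^{\beta_i},\, z^\beta \rangle = S(z^\beta) \sum_{\beta = \hat{\beta}_1 + \cdots + \hat{\beta}_n} \prod_{i=1}^n \frac{\langle D^{k_i} z^{\beta_i},\, z^{\hat{\beta}_i}\rangle}{S(z^{\hat{\beta}_i})},
\]
the inner sum running over ordered decompositions of $\beta$. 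Dividing by $S(\tilde{\prod}_{i=1}^n z^{\beta_i})\, S(z^\alpha)$ and reading off the contribution of each admissible tuple $(k_1,\dots,k_n)$ — which is exactly what the outer sum in \eqref{explicit_formula_extraction_1} records — gives the claimed formula for $E$; the constraints $\prod_i z_{k_i} \in \CM$ (which makes $z^\alpha \in \CM$) and $n \ge 1$ (one cannot insert the empty multi-indice) appear here, and the order among the $\hat{\beta}_i$ has to be retained because the exponents $k_i$, hence the derivations $D^{k_i}$, are ordered — as the case $\beta = [1,1]$ in the statement shows.

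The step I expect to be the main obstacle is the careful bookkeeping of orderings and symmetry factors. A forest $\tilde{\prod}_{i=1}^n z^{\beta_i}$ carries no order among its factors, whereas \eqref{multiple_insertion} attaches to each factor $z^{\beta_i}$ its own exponent $k_i$ through the index $i$, and \eqref{explicit_formula_extraction_1} sums both over all ordered tuples $(k_1,\dots,k_n)$ and over all ordered decompositions $\beta = \hat{\beta}_1 + \cdots + \hat{\beta}_n$; one has to check that, together with the forest symmetry factor $S(\tilde{\prod}_{i=1}^n z^{\beta_i})$ in the denominator — which contains the multiplicity factors $r_i!$ of \eqref{symmetry_factor_2} — this reproduces each term of $\Delta^{\!-} z^\beta$ with exactly the right coefficient, with no over- or under-counting when several of the $\beta_i$ coincide. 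The way I would handle this is to do the adjoint computation first with the $\beta_i$ treated as an ordered tuple and only afterwards pass to unordered forests, verifying that the factors $r_i!$ in $S(\tilde{\prod}_{i=1}^n z^{\beta_i})$ exactly absorb the overcounting; as in Proposition~\ref{LOT_coproduct}, the multiplicativity of $S$ and its explicit closed forms \eqref{symmetry_factor_1}--\eqref{symmetry_factor_2} are what keep this tractable. One also has to confirm that $\alpha!$ is indeed the scalar produced by the iterated partial derivatives, which is the elementary fact noted above.
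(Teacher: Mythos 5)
Your proposal is correct and follows essentially the same route as the paper's proof: both compute $\langle \tilde{\prod}_{i=1}^n z^{\beta_i} \star_1 z^\alpha, z^\beta\rangle$ via the duality, observe that the constraint $n = |z^\alpha|$ forces the derivative factor $(\prod_{i=1}^n\partial_{z_{k_i}})z^\alpha$ to reduce to the scalar $\alpha!$ with $z^\alpha = \prod_i z_{k_i}$ populated, and then expand the product $\prod_i D^{k_i}z^{\beta_i}$ over ordered decompositions $\beta = \hat\beta_1 + \cdots + \hat\beta_n$ using the multiplicativity of $S$ before matching coefficients. Your extra remark on ordering the $\beta_i$ first and letting the $r_i!$ in $S(\tilde{\prod}_i z^{\beta_i})$ absorb the overcounting is a sound way to handle a bookkeeping point the paper leaves implicit.
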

\begin{proof}	
We start with a more general form which covers all $\CF \otimes \CM$.
	\begin{equs} 
		\Delta^{\! -} z^{\beta}  =
		\sum_{\tilde{\prod}_{i=1}^n z^{\beta_i} \in \CF}
		\sum_{k_1,...,k_n\in \mathbb{N}}    \sum_{ z^{\alpha} \in \CM}
		E(\tilde{\prod}_{i=1}^nz^{\beta_i}, z^{\alpha} ,z^{\beta})  
		\tilde{\prod}_{i=1}^n z^{\beta_i}
		\otimes   z^{\alpha}, 
	\end{equs}
	one can regard  $\tilde{\prod}_{i=1}^n z^{\beta_i}$ as the forest simultaneously inserted into $z^{\alpha}$ through $\star_1$ and has
	\begin{equs}
		\langle  \tilde{\prod}_{i=1}^n z^{\beta_i} \otimes z^{\alpha} ,	\Delta^{\!-} z^{\beta} \rangle = S( \tilde{\prod}_{i=1}^n z^{\beta_i}) S(z^{\alpha})  	E(\tilde{\prod}_{i=1}^nz^{\beta_i}, z^{\alpha} ,z^{\beta}) .
	\end{equs}
	On the dual side, one has
	\begin{equs}
		&	\tilde{\prod}_{i=1}^n z^{\beta_i} \star_1 z^{\alpha} = \sum_{z^\beta \in \CM}
		\frac{z^\beta}{S(z^\beta)}
		\left\langle \sum_{k_1,...,k_n\in \mathbb{N}}  
		\left(\prod_{i=1}^nD^{k_i}z^{\beta_i}\right)\left[\left(\prod_{i=1}^n\partial_{z_{k_i}}\right)z^\alpha\right] ,	 z^{\beta} \right\rangle 
		\\& = 
		\sum_{\beta  = \hat{\beta}_1 + \cdots + \hat{\beta}_n + \hat{\alpha}}
		\sum_{k_1,...,k_n\in \mathbb{N}}	
		\prod_{i=1}^n
		\frac{\langle  
			D^{k_i}z^{\beta_i}, z^{\hat{\beta}_i} \rangle }{S(z^{\hat{\beta}_i})}
		\frac{
			\langle \prod_{i=1}^n\partial_{z_{k_i}} z^{\alpha} , z^{\hat{\alpha}} \rangle}
		{S(z^{\hat{\alpha}})}  z^{\hat\alpha}\prod_{i=1}^nz^{\hat\beta_i}.
	\end{equs}
However, as we have the restriction $n = |z^\alpha|$, $z^{\hat\alpha}$ can only be $z^{\mathbf{0}}$. In turn, only if $z^\alpha = \prod_{i=1}^n z_{k_i}$, $\langle \prod_{i=1}^n\partial_{z_{k_i}} z^{\alpha} , z^{\hat{\alpha}} \rangle \ne 0$. Therefore, we have the restriction on $k_i$ that $\prod_{i} z_{k_i} \in \CM$. Then, we have 
\begin{equation*}
\frac{
	\langle \prod_{i=1}^n\partial_{z_{k_i}} z^{\alpha} , z^{\hat{\alpha}} \rangle}
{S(z^{\hat{\alpha}})}  = \frac{
	\langle \prod_{i=1}^n\partial_{z_{k_i}}\prod_{i=1}^n z_{k_i} , z^{\mathbf{0}} \rangle}
{1} = \alpha!
\end{equation*}
	which yields
	\begin{equs}
		&\langle  \tilde{\prod}_{i=1}^n z^{\beta_i} \star_1 z^{\alpha}  ,	z^{\beta} \rangle
		\\=& \sum_{\beta  = \hat{\beta}_1 + \cdots + \hat{\beta}_n}
		\sum_{\substack{k_1,...,k_n\in \mathbb{N}\\ \prod_{i} z_{k_i} \in \CM}}   	
		\alpha!S(z^\beta)
		\prod_{i=1}^n
		\frac{\langle  
			D^{k_i}z^{\beta_i}, z^{\hat{\beta}_i} \rangle }{S(z^{\hat{\beta}_i})},
	\end{equs}
where $z^\alpha = \prod_{i=1}^n z_{k_i}$.
	The equality from the duality
	\begin{equs}
		\langle \tilde{\prod}_{i=1}^n z^{\beta_i}\otimes \prod_{i=1}^n z_{k_i}  ,	\Delta^{\!-} z^{\beta} \rangle & = 
		\langle  \tilde{\prod}_{i=1}^n z^{\beta_i} \star_1 \prod_{i=1}^n z_{k_i}  ,	z^{\beta} \rangle
	\end{equs}
	allows us to conclude.
\end{proof}
The following example demonstrates how to compute the extraction-contraction coproduct
\begin{example}\label{example_EC}
	Suppose we have $z^\beta = z_0^2z_2$. 
\begin{itemize}
\item The first step is to find all possible $k$  corresponding to each $1 \le n \le |z^\beta|$ according to the condition that $z^\alpha = \prod_{i=1}^n z_{k_i}$ is populated. In this particular case $n = 1, 2, 3$. For $n=1$, the populated multi-indice $z^\alpha$ can only be $z_0$. Then $k_1 = 0$. For $n=2$ the populated $z^\alpha$ can only be $z_0z_1$. Then we have $k_1 = 0$ and $k_2 = 1$, or $k_1 = 1$ and $k_2 = 0$. Continuing this procedure, we we can find results shown in the table below.
\begin{table}[H]
	\centering
	\begin{tabular}{||c c c c ||} 
		\hline
		Possibilities& $n$ & $z^{\alpha}$ & $k$ \\ [0.5ex] 
		\hline\hline
		1&$ 1$ & $z_0$ &  $k_1 = 0$ \\[1ex] 
		\hline
		2&$ 2$ & $z_0z_1$ & $k_1 = 0$, $k_2 = 1$ \\ [1ex]
		\hline
		3&$ 2$ & $z_0z_1$ & $k_1 = 1$, $k_2 = 0$ \\ [1ex]
		\hline
		4&$3$ & $z_0^2z_2$& $k_1 = k_2 = 0$, $k_3 = 2$ \\ [1ex]
		\hline
		5&$3$ & $z_0^2z_2$& $k_1 = k_3 = 0$, $k_2 = 2$ \\ [1ex]
		\hline
		6&$3$ & $z_0^2z_2$& $k_2 = k_3 = 0$, $k_1 = 2$ \\ [1ex]
		\hline
		7&$3$ & $z_0z_1^2$& $k_1 = k_2 = 1$, $k_3 = 0$ \\ [1ex]
		\hline
		8&$3$ & $z_0z_1^2$& $k_1 = k_3 = 1$, $k_2 = 0$ \\ [1ex]
		\hline
		9&$3$ & $z_0z_1^2$& $k_2 = k_3 = 1$, $k_1 = 0$ \\ [1ex]
		\hline
	\end{tabular}
\end{table}
 \item Secondly, we need to select those possibilities such that there exist populated multi-indices $z^{\beta_1},...,z^{\beta_n}$ such that $\prod_{i=1}^nD^{k_i}z^{\beta_i} = z^\beta$. Moreover, since there is no order among multi-indices in the forest $\tilde{\prod}_{i=1}^nz^{\beta_i}$, we need to delete the repeated cases (with the same forest and  $k$). For example, in possibility 2 we have $z^{\beta_1} = z_0$ and $z^{\beta_2} = z_0z_1$ with $k_1 = 0$, $k_2 = 1$ while in possibility 3 we have  $z^{\beta_2} = z_0$ and $z^{\beta_1} = z_0z_1$ with $k_2 = 0$, $k_1 = 1$. As the forest $z_0 \odot z_0z_1$ with $k_1 = 0$ and $k_2=1$ covers both of them, they actually have the same pairs $(z^{\beta_i}, k_i)$, i.e.,$\{(z^{\beta_1},0),(z^{\beta_2},1)\} = \{(z^{\beta_2},1),(z^{\beta_1},0)\}$. Thus, according to $\sum_{\tilde{\prod}_{i=1}^n z^{\beta_i} \in \CF}$, we only keep one of them. Possibilities 7, 8, and 9 do not have corresponding $\tilde{\prod}_{i=1}^nz^{\beta_i}$, we therefore, delete them. The possibilities left are listed below.
 \begin{table}[H]
 	\centering
 	\begin{tabular}{||c c c c c c ||} 
 		\hline
 		Possibilities& $n$ & $z^{\alpha}$ & $k$ & $\tilde{\prod}_{i=1}^nz^{\beta_i}$& $E$ \\ [0.5ex] 
 		\hline\hline
 		1&$ 1$ & $z_0$ &  $k_1 = 0$& $z_0^2z_2$ & $1$ \\[1ex] 
 		\hline
 		2&$ 2$ & $z_0z_1$ & $k_1 = 0$, $k_2 = 1$& $z_0 \odot z_0z_1$ &$2$ \\ [1ex]
 		\hline
 		3&$3$ & $z_0^2z_2$& $k_1 = k_2 = 0$, $k_3 = 2$& $z_0 \odot z_0 \odot z_0$ & $\frac{1}{3}$  \\ [1ex]
 		\hline
 		4&$3$ & $z_0^2z_2$& $k_1 = k_3 = 0$, $k_2 = 2$ & $z_0 \odot z_0 \odot z_0$ & $\frac{1}{3}$ \\ [1ex]
 		\hline
 		5&$3$ & $z_0^2z_2$& $k_2 = k_3 = 0$, $k_1 = 2$ & $z_0 \odot z_0 \odot z_0$ & $\frac{1}{3}$ \\ [1ex]
 		\hline
 	\end{tabular}
 \end{table}
Note that, possibilities 3, 4, and 5 are the same forest with different $k$. That is they have different pairs $(z^{\beta_i}, k_i)$. For example, possibility 4 has pairs $\{(z^{\beta_1},0),(z^{\beta_2},0),(z^{\beta_3},2)\}$ but possibility 5 has pairs $\{(z^{\beta_1},0),(z^{\beta_2},2),(z^{\beta_3},0)\}$.
We cannot find one form to cover all of them. Thus, they are not repetitions.  Due to $\sum_{k_1,...,k_n\in \mathbb{N}}$, we need to keep all of them. 
\item
 Finally, we add up all possibilities and obtain
\begin{equation*}
	\Delta^{\! -} z^{\beta}  = z_0^2z_1 \otimes z_0 + 2 z_0 \odot z_0z_1 \otimes z_0z_1 + z_0 \odot z_0 \odot z_0 \otimes z_0^2z_1
\end{equation*}
\end{itemize}
\end{example}
%
It is also possible to use the adjoint map $\bar D$ to rewrite the formula.
\begin{theorem} \label{LOT_coproduct_3} The explicit formula of the extraction-contraction multi-indices coproduct $\Delta^{\!-} $ is 
	\begin{equs} \label{explicit_formula_extraction_2}
		\Delta^{\! -} z^{\beta}  =&
		\sum_{\tilde{\prod}_{i=1}^n z^{\beta_i} \in \CF}
		\sum_{\substack{k_1,...,k_n\in \mathbb{N}\\ \prod_{i} z_{k_i} \in \CM}}   	
		E(\tilde{\prod}_{i=1}^nz^{\beta_i}, z^\alpha ,z^{\beta})  
		\tilde{\prod}_{i=1}^n z^{\beta_i}
		\otimes   z^\alpha, \quad\quad	
	\end{equs}
	with
	\begin{equs}
		E(\tilde{\prod}_{i=1}^nz^{\beta_i}, z^\alpha ,z^{\beta}) 
		=  	\sum_{\beta  = \hat{\beta}_1 + \cdots + \hat{\beta}_n}	  \frac{\alpha!}{S(z^\alpha)S_{\tiny{\text{ext}}}(\tilde{\prod}_{i=1}^n z^{\beta_i})}
		\prod_{i=1}^n
		\frac{\langle  
			z^{\beta_i}, \bar D^{k_i} z^{\hat{\beta}_i} \rangle }{S(z^{{\beta}_i})}
	\end{equs}
where $z^\alpha := \prod_{i=1}^n z_{k_i}$ and we have order among $\hat{\beta}_1, \cdots , \hat{\beta}_n$.
Note that $ n \ge 1$ since inserting the empty multi-indice is forbidden.	
\end{theorem}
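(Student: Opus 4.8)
The plan is to obtain \eqref{explicit_formula_extraction_2} as a rewriting of \eqref{explicit_formula_extraction_1} rather than by redoing the duality argument from scratch: the outer sums in the two statements run over literally the same data (forests $\tilde{\prod}_{i=1}^n z^{\beta_i}\in\CF$, tuples $(k_1,\dots,k_n)$ subject to $\prod_i z_{k_i}\in\CM$, and ordered splittings $\beta=\hat\beta_1+\cdots+\hat\beta_n$), so it is enough to check that the two expressions for the coefficient $E(\tilde{\prod}_{i=1}^n z^{\beta_i},z^\alpha,z^\beta)$ agree term by term in the inner sum over splittings.

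First I would convert each pairing from $D$ to $\bar D$. Since $\bar D$ is by definition the adjoint of $D$ for $\langle\cdot,\cdot\rangle$, an immediate induction gives $\langle D^{k}z^{\bar\beta},z^{\hat\beta}\rangle=\langle z^{\bar\beta},\bar D^{k}z^{\hat\beta}\rangle$ for every $k$; applying this with $k=k_i$, $\bar\beta=\beta_i$, $\hat\beta=\hat\beta_i$ replaces $\langle D^{k_i}z^{\beta_i},z^{\hat\beta_i}\rangle$ by $\langle z^{\beta_i},\bar D^{k_i}z^{\hat\beta_i}\rangle$. Next I would reorganize the symmetry factors using the multiplicativity of $S$. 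Because $S(z^\gamma)=\prod_k (k!)^{\gamma(k)}$ is additive in the exponent, an ordered splitting $\beta=\hat\beta_1+\cdots+\hat\beta_n$ gives $S(z^\beta)=\prod_{i=1}^n S(z^{\hat\beta_i})$, so the prefactor $S(z^\beta)/\prod_i S(z^{\hat\beta_i})$ hidden in \eqref{explicit_formula_extraction_1} equals $1$. On the other side, $S(z^\beta z^{\bar\beta})=S(z^\beta)S(z^{\bar\beta})$ gives $S(\prod_{i=1}^n z^{\beta_i})=\prod_i S(z^{\beta_i})$ for the ordinary (non-forest) product, whence $S_{\tiny{\text{ext}}}(\tilde{\prod}_{i=1}^n z^{\beta_i})^{-1}\prod_i S(z^{\beta_i})^{-1}=S(\tilde{\prod}_{i=1}^n z^{\beta_i})^{-1}$. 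Putting the two pieces together, both coefficient formulae collapse to
\[
E=\sum_{\beta=\hat\beta_1+\cdots+\hat\beta_n}\frac{\alpha!}{S(z^\alpha)\,S(\tilde{\prod}_{i=1}^n z^{\beta_i})}\prod_{i=1}^n\langle z^{\beta_i},\bar D^{k_i}z^{\hat\beta_i}\rangle,
\]
which is the desired identity.

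I do not expect any genuine obstacle here; the content is parallel to the passage from Proposition~\ref{LOT_coproduct} to Theorem~\ref{alternative_LOT}, now carried out with the insertion product $\star_1$ in place of $\star_2$. The only points that need attention are bookkeeping: making sure that the range of the two sums is recognised as identical (in particular that the order among the $\hat\beta_i$, and among the $k_i$, is respected exactly as in Proposition~\ref{LOT_coproduct2}), and keeping the two notions of product distinct — the forest product, whose symmetry factor \eqref{symmetry_factor_2} carries the $r_i!$ factors, versus the ordinary product of multi-indices, for which $S$ is multiplicative. Alternatively, one can bypass the rewriting entirely and reprove the statement from duality by inserting the adjoint relation $\langle D a,b\rangle=\langle a,\bar D b\rangle$ directly into the computation of $\tilde{\prod}_{i=1}^n z^{\beta_i}\star_1 z^\alpha$ in the proof of Proposition~\ref{LOT_coproduct2}; both routes require only Proposition~\ref{bar_D} as input.
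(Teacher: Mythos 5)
Your proposal is correct and follows essentially the same route as the paper: the paper's proof also starts from the coefficient in \eqref{explicit_formula_extraction_1}, passes each pairing to the adjoint via $\langle D^{k_i}z^{\beta_i},z^{\hat\beta_i}\rangle=\langle z^{\beta_i},\bar D^{k_i}z^{\hat\beta_i}\rangle$, and then absorbs the symmetry factors using $\prod_{i=1}^n S(z^{\hat\beta_i})=S(z^\beta)$ together with the definition of $S_{\tiny{\text{ext}}}$. Your bookkeeping of the two kinds of products and of the identical ranges of summation is exactly the point the paper relies on implicitly.
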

\begin{proof} 
We just pass to the adjoint maps in \eqref{explicit_formula_extraction_1}.
The coefficient $E$ in formula \eqref{explicit_formula_extraction_1} is given by
\begin{equs}
	E(\tilde{\prod}_{i=1}^nz^{\beta_i}, z^\alpha ,z^{\beta}) 
&=   
\sum_{\beta  = \hat{\beta}_1 + \cdots + \hat{\beta}_n} \frac{\alpha!S(z^\beta)}{S( \tilde{\prod}_{i=1}^n z^{\beta_i})S(z^\alpha)}
\prod_{i=1}^n
\frac{\langle  
	D^{k_i}z^{\beta_i}, z^{\hat{\beta}_i} \rangle }{S(z^{\hat{\beta}_i})}
\\&= 
\sum_{\beta  = \hat{\beta}_1 + \cdots + \hat{\beta}_n} \frac{\alpha!\prod_{i=1}^nS(z^{{\beta}_i})}{S( \tilde{\prod}_{i=1}^n z^{\beta_i})S(z^\alpha)}
\prod_{i=1}^n
\frac{\langle  
	z^{\beta_i}, \bar D^{k_i} z^{\hat{\beta}_i} \rangle }{S(z^{{\beta}_i})}
	\\&= 
	\sum_{\beta  = \hat{\beta}_1 + \cdots + \hat{\beta}_n} 
	\frac{\alpha!}{S(z^\alpha)S_{\tiny{\text{ext}}}(\tilde{\prod}_{i=1}^n z^{\beta_i})}
	\prod_{i=1}^n
	\frac{\langle  
		z^{\beta_i}, \bar D^{k_i} z^{\hat{\beta}_i} \rangle }{S(z^{{\beta}_i})},
\end{equs}
where we have used the property of  the symmetry factor that ${\prod}_{i=1}^nS(z^{\hat{\beta}_i}) = S(z^\beta)$.
	\end{proof}
\begin{remark}
	One cannot simplify further \eqref{explicit_formula_extraction_2} by using
		\begin{equs}
	\sum_{z^{\beta_1},\ldots,z^{\beta_n} \in \CM}  
		\prod_{i=1}^n
		\frac{\langle  
			z^{\beta_i}, \bar{D}
			^{k_i}z^{\hat{\beta}_i} \rangle }{S(z^{\beta_i})} =  \bar{D}
		^{k_i}z^{\hat{\beta}_i} 
	\end{equs}
because of the extra coefficient $  	\frac{1}{S_{\tiny{\text{ext}}}( \tilde{\prod}_{i=1}^n z^{\beta_i})  }$.
	\end{remark}
One can repeat Example~\ref{example_EC}. The only difference is using $\bar{D}$ instead of $D$ to compute the coefficient $E$.

\section{SPDE multi-indices}\label{Sec::4}
We are looking at the class of subcritical semi-linear SPDEs of the form
\begin{equation}\label{set01}
\left( \partial_t - 	\mcL \right) u = \sum_{\mfl\in\mfL^-} a^\mfl (\mathbf{u}) \xi_\mfl, \quad (t,x) \in \mathbb{R}_+ \times \mathbb{R}^d,
\end{equation}
where $ \mathcal{L} $ is a differential operator, $ \mfL^- $ is a finite set and the $ \xi_{\mfl} $ are space-time noises (random distributions). We suppose that $ 0 \in \mfL^-$ and that $ \xi_{0} = 1 $.
Moreover, $u$ is a function of $d+1$ variables $t=x_0$, $x_1$, \ldots, $x_d$, and each $a^\mfl(\mathbf{u})$ is a function of $u$ and its iterated partial derivatives.    For $\bn=(n_0,\ldots,n_d)\in\mathbb{N}^{d+1}$, we set
 \[
u^{(\bn)}:= (\prod_{i=0}^d \partial_{x_i}^{n_i} )(u),
 \]
From these variables, one can define commutative derivatives $ \partial_{u^{(\bn)}} $. A first observation is that these new derivatives do not commute with the $ \partial_{x_i} $:
\begin{equs} \label{non_commutation}
  \partial_{u^{(\bn)}}\partial_{x_i}  =\partial_{x_i}\partial_{u^{(\bn)}} +  \partial_{u^{(\bn-e_i)}},
\end{equs}
where $  e_i$ is the standard basis vector of $\mathbb{N}^{d+1}$.
One can actually get an expression for $ \partial_{x_i} $ in terms of the $ \partial_{u^{(\bn)}} $ through the chain rule:
\begin{equs}
\partial_{x_i}=\sum_{\bn\in\mathbb{N}^{d+1}}u^{(\bn+e_i)}\partial_{u^{(\bn)}}.
\end{equs}
In the sequel, we will use the short hand notation:
\begin{equs}
	\partial^k = \prod_{i=0}^n \partial_{x_i}^{k_i}.
\end{equs}
 Then, from these derivatives, one defines elementary differentials that will appear in a local expansion of the solution of \eqref{set01}. They are given by
 \[ \partial^k
\prod_{{\bf{n}}\in \mathbb{N}^{d+1}} \partial_{u^{(\bn)}}^{\alpha(\bn)} 
a^\mfl (\mathbf{u}).
 \]
 where $ k \in \mathbb{N}^{d+1} $ and $ \alpha : \mathbb{N}^{d+1} \rightarrow \mathbb{N}  $ has a finite support. One can observe that we have used a specific basis by ordering the derivatives: We have put all the derivatives $ \partial_{u^{(\bn)}} $ on the right. If the derivatives are not in this order thanks to  \eqref{non_commutation}, we can expressed in the desired form.
 
 This definition motivates the introduction of the following SPDE multi-indices a generalisation of the ODE multi-indices.
 First, we consider two types of letters: $b_i$ representing derivatives $\partial_{x_i}$, $0\le i\le d$ and $ \bn  \in \mathbb{N}^{d+1}$ standing for $\partial_{u^{(\bn)}}$. We introduce an abstract associative algebra $\cA$ generated by all these symbols, and impose the relations 
\begin{equs} \label{relation_words}
b_ib_j=b_jb_i,\quad  \bn\mathbf{m}=\mathbf{m}\bn,\quad   \bn b_i  = b_i \bn + (\bn-e_i).
\end{equs}
Because $\bn-e_i\in\mathbb{N}^{d+1}$, we consider $(\bn-e_i)$ as a letter in the alphabet~$\mathbb{N}^{d+1}$. The fact that the letters $ \bn $ and $ b_i $ do not commute reflects the identity \eqref{non_commutation}.
 We shall use a set of formal variables $\coord=(z_{(\mfl,w)})_{(\mfl,w) \in \mfL^-\times\cA}$, which we consider to be linear in the argument $w$, so that 
  \[
z_{(\mfl,c_1w_1+c_2w_2)}=c_1z_{(\mfl,w_1)}+c_2z_{(\mfl,w_2)}.
 \]
Each such variable $ z_{(\mfl,w)} $ corresponds to $\mathrm{D}^{w} a^\mfl(\mathbf{u}(x))$, where $\mathrm{D}^w$ is obtained from $w$ by replacing $b_i$ with $\partial_{x_i}$ and $\bn$ with $\partial_{u^{(\bn)}}$. Multi-indices $\beta$ over $\coord$  measure the frequency of the variables $  z_{(\mathfrak{l},w)} $, so that we can define SPDE multi-indices by monomials
\begin{equs}
	z^{\beta} : = \prod_{(\mfl,w) \in \mfL^-\times\cA}  z_{(\mfl,w)}^{\beta(\mfl,w)}. 
\end{equs}
 Same as ODE multi-indices, we are interested only in \emph{populated} SPDE multi-indices satisfying the condition
\begin{equs}\label{populated_SPDE}
	\sum_{(\mfl,w)} (1 - \length{w})\beta(\mfl,w)  = 1.
\end{equs}
where  $\length{w}$ is the number of letters $\bn\in\mathbb{N}^{d+1}$ in	$w$. One can observe that we do not take into account the letter $b_i$ as the next Taylor iteration cannot happen at monomials of $x$. From the rooted tree point of view, they correspond to terminal edges, which means nodes whose edges connecting to the trunk are decorated by letters $b$ have no incoming edges. Note that while Relations \eqref{relation_words} imply some dependencies between the variables $z_{(\mfl,w)}$, those relations are homogeneous with respect to the degree $\length{w}$, so the populated condition is well defined, and we can define the vector space $\mathbf{M}_{\mathcal{R}}$ as the span of all populated SPDE multi-indices. 

\begin{example}\label{ex_multi_index}(Correspondance between Elementray Differentials and Multi-indices)
	
	Suppose we have the monomial 
	\begin{equs}
		\left(\partial_{x_2}\partial_{x_5}\partial_{u^{(\bn_1)}}^2\partial_{u^{(\bn_2)}}^3\partial_{u^{(\bn_3)}}   a^\mfl(\mathbf{u})\right)^2
		\left(\partial_{u^{(\bn_1)}}\partial_{u^{(\bn_4)}}^2a^\mfl(\mathbf{u})\right)^3 \left(a^\mfl(\mathbf{u})\right)^{17}.
	\end{equs}
	Then, the associated words $w$ with non-zero frequency are $w_1 = b_2b_5\bn_1\bn_1\bn_2\bn_2\bn_2\bn_3$, $w_2 = \bn_1\bn_4\bn_4$, and the empty word $w_3 = \emptyset$.
	Since the frequency of $w_1$, $w_2$ and $w_3$ are $\beta(\mfl,w_1) = 2$, $\beta(\mfl,w_2) = 3$ and  $\beta(\mfl,w_3) = 17$ respectively, we have the following multi-indice representing this monomial:
	\begin{equs}
		z^\beta &= z_{(\mfl,w_1)}^2  z_{(\mfl,w_2)}^3  z_{(\mfl,w_3)}^{17}
		\\& = z_{(\mfl,b_2b_5\bn_1\bn_1\bn_2\bn_2\bn_2\bn_3)}^2  z_{(\mfl,\bn_1\bn_4\bn_4)}^3 z_{(\mfl,\emptyset)}^{17}.
	\end{equs}
\end{example}

\begin{example} (Population Condition)
	
	Consider again the multi-index $z^\beta$ in Example \ref{ex_multi_index}. 
	\begin{equs}
		&\length{w_1} = w_1[\bn_1]+w_1[\bn_2]+w_1[\bn_3] = 2+3+1 = 6,
		\\&
		\length{w_2} = w_2[\bn_1]+w_2[\bn_4] = 1+2 = 3, \quad \length{w_3} = \length{\emptyset} = 0,
		\\& 	\sum_{(\mfl,w)} (1 - \length{w})\beta(\mfl,w)  = (1-6)2 + (1-3)3 + (1-0)17 = 1.
	\end{equs}
	Therefore, $z^\beta$ is populated.
\end{example}

Let us introduce a family of derivations on the space of SPDE multi-indices; we shall denote them $D^{(\bn)}$, $\bn \in \mathbb{N}^{d+1}$, and $\partial_i$, $0\le i\le d$. The derivations $D^{(\bn)}$ and $\partial_{i}$ are defined by
\begin{equs}
	\label{derivations_reg}
	D^{(\bn)} z_{(\mathfrak{l},w)} = z_{(\mathfrak{l},\bn w)}, \quad \partial_{i} z_{(\mathfrak{l},w)} = z_{(\mathfrak{l},b_iw)}
\end{equs}
 Note that these derivations of course respect the linear relations between the variables $z_{(\mathfrak{l},w)}$ coming from Relations \eqref{relation_words}. Moreover, these relations immediately imply that
\begin{equs} \label{non-commutation_2}
  D^{(\bn)} \partial_i  =\partial_i D^{(\bn)} + D^{(\bn-e_i)}.
\end{equs}
Using the commuting derivations $D^{(\bn)}$, we can still define a family of products $\triangleright_\bn$ on the vector space of all SPDE  multi-indices by setting
\begin{equs}
z^{\gamma} \triangleright_\bn z^{\gamma'} = z^{\gamma} D^{(\bn)} (z^{\gamma'}). 
\end{equs}
In the sequel, we will use the following short hand notation: $ \partial^k = \sum_{i=0}^d \partial_i^{k_i} $. One then defines forests of SPDE multi-indices by terms of the form:
\begin{equs} \label{SPDE_forest}
	\partial^k \tilde{\prod}_{i=1}^n z^{\beta_i} D^{(\bn_i)}
\end{equs}
where the product $ \tilde{\prod} $ is a commutative product. We also denote this product by $ \odot $. One has 
\begin{equs} 
	\partial^k \tilde{\prod}_{i=1}^n z^{\beta_i} D^{(\bn_i)} \, \odot \, \partial^{\bar{k}} \tilde{\prod}_{j=1}^m z^{\bar\beta_j} D^{(\bar{\bn}_j)}  = 	\partial^{k+\bar{k}} \tilde{\prod}_{i=1}^n z^{\beta_i} D^{(\bn_i)} \tilde{\prod}_{j=1}^m z^{\bar\beta_j} D^{(\bar{\bn}_j)}.
\end{equs}
 The difference from ODE multi-indice is that one has to keep track of the decorations and therefore to have the derivatives $ D^{(\bn_i)} $. Such elements make sense when one introduces the product given in \eqref{definition_star_2_spde}. The set of all forest is denoted by $\CF$.

Considering the non-commutative derivatives, to simplify subsequent calculations, we would like to standardise the order of letters in the word $w$ such that derivatives with respect to $x_i$ appear in front of derivatives in the direction $u^{(\mathbf{n})}$. That is, in the sequel, we will consider SPDE multi-indices in the form: 
\begin{equation*}
		z^{\beta} : = \prod_{(\mfl,w) \in \mfL^-\times\cA_c} z_{(\mfl,w)}^{\beta(\mfl,w)}.
\end{equation*}
where elements of $ \CA_{c} $ are of the form $ u w $ where $ u \in M(\mathbb{B}) $ with $ \mathbb{B} =  \lbrace b_{0},...,b_{d} \rbrace  $ and $ v = M(\mathbb{N}^{d+1})$. Here for a set $A$, $M(A)$ is the commutative monoid in the letters of $A$. We shall also use notations $ \coord_c=(z_{(\mfl,w)})_{(\mfl,w) \in \mfL^-\times\cA_c} $, $\mathcal{M}_{\mathcal{R}_c}$ denoting the set of all populated SPDE multi-indices with the above-mentioned standardised letter order in $w$, and $\mathbf{M}_{\mathcal{R}_c}$ being the linear span of multi-indices in $\mathcal{M}_{\mathcal{R}_c}$. 
In the sequel, we will make the natural identification of an element of $ M(\mathbb{B}) $ with $ \mathbb{N}^{d+1} $. For every $ u  \in M(\mathbb{B})  $, we associate:
\begin{equs}
	u:= \prod_{i=0}^d b_i^{u[b_i]} \equiv \sum_{i=0}^d u[b_i] e_i,
\end{equs} 
where $u[b_i]$ is the number of letter $b_i$ in the word $u$.
In the basis $ \mathcal{} $, one has an explicit expression for the derivation $ D^{(\bn)} $ and $ \partial_i $ in the next proposition:
\begin{proposition}\label{derivation_basis}
	For every $ (\mfl,w) \in \mfL^- \times \cA_c $ with $ w = uv $,  $ u \in M(\mathbb{B})$ and $ v = M(\mathbb{N}^{d+1})$,  one has:
	\begin{equs} \label{identity_derivation}
		\begin{aligned}
			&\partial_i z_{(\mfl,w)}  = z_{(\mfl,b_i w)},
		\\
		&D^{(\bn)} z_{(\mfl,w)}  = z_{(\mfl,\bn w)} = \sum_{\ell \in \mathbb{N}^{d+1}} D^{(\bn)}_{\ell}  z_{(\mfl,w)},
		\end{aligned}
	\end{equs}
where $ D^{(\bn)}_{\ell} $ is given by
\begin{equs}
D^{(\bn)}_{\ell}  z_{(\mfl,w)} := 	 { u \choose \ell} z_{(\mfl, (u - \ell ) (\bn-\ell) v  )},
\end{equs}
where recall that, we have identified $u$ with $\sum_{i=0}^d u[b_i]e_i$. Thus, $u$ is also a multi-indice whose $i$-th entry is $u[b_i]$, and $u - \ell$ represents the word $\prod_{i=0}^db_i^{(u[b_i]-\ell_i)}$.
	\end{proposition}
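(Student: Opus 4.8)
The plan is to handle the two derivations separately: the statement about $\partial_i$ is immediate, while the one about $D^{(\bn)}$ requires a short induction. For $\partial_i$, observe that if $w=uv$ is in standard form with $u\in M(\mathbb{B})$ and $v\in M(\mathbb{N}^{d+1})$, then $b_iw=(b_iu)\,v$ is again in standard form since $b_iu\in M(\mathbb{B})$; hence $\partial_i z_{(\mfl,w)}=z_{(\mfl,b_iw)}$ is already written in the basis $\coord_c$, and nothing more is needed beyond recalling \eqref{derivations_reg}.

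For $D^{(\bn)}$ the task is to rewrite $z_{(\mfl,\bn w)}=z_{(\mfl,\bn uv)}$ in standard form. Since $\bn$ commutes with every letter of $M(\mathbb{N}^{d+1})$ by \eqref{relation_words}, it suffices to compute the normal form of the word $\bn u$ inside $\cA_c$: the factor $v$ is carried along unchanged and simply absorbs the new $\mathbb{N}^{d+1}$-letter that gets produced. Thus the proposition reduces to the combinatorial identity
\[
\bn u \;=\; \sum_{\ell\in\mathbb{N}^{d+1}}\binom{u}{\ell}\,(u-\ell)(\bn-\ell)\qquad\text{in }\cA_c,
\]
subject to the conventions that $\binom{u}{\ell}=\prod_{i=0}^d\binom{u[b_i]}{\ell_i}$ vanishes unless $\ell\le u$ componentwise and that a summand is discarded whenever $\bn-\ell\notin\mathbb{N}^{d+1}$ (equivalently, $z_{(\mfl,\cdot)}$ is read as $0$ as soon as one of its $\mathbb{N}^{d+1}$-letters has a negative entry). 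I would prove this by induction on the number $\sum_{i=0}^d u[b_i]$ of $b$-letters in $u$. The base case, $u$ the empty word, is trivial. For the inductive step, pick $i$ with $u[b_i]\ge1$, write $u=b_iu'$, and apply $\bn b_i=b_i\bn+(\bn-e_i)$ to get $\bn u=b_i(\bn u')+(\bn-e_i)u'$; using the induction hypothesis on both $\bn u'$ and $(\bn-e_i)u'$ and then reindexing the second sum by $\ell\mapsto\ell-e_i$, the coefficient of $(u-\ell)(\bn-\ell)$ becomes $\binom{u-e_i}{\ell}+\binom{u-e_i}{\ell-e_i}=\binom{u}{\ell}$ by Pascal's rule in the $i$-th component. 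Reading off the $\ell$-th summand gives exactly $D^{(\bn)}_\ell z_{(\mfl,w)}=\binom{u}{\ell}\,z_{(\mfl,(u-\ell)(\bn-\ell)v)}$, which is the assertion.

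The only delicate point I anticipate is the boundary bookkeeping: one has to make sure that the reindexing $\ell\mapsto\ell-e_i$ and Pascal's identity remain valid at the edges where $\ell_i=0$ or where $\bn-\ell$ leaves $\mathbb{N}^{d+1}$, and that the value of $\bn u$ does not depend on which letter $b_i$ one chooses to peel off first. The latter is guaranteed by the well-definedness of the normal form, which is precisely what underlies the choice of $\cA_c$ as a set of normal-form words (and hence of the $z^\beta$, $(\mfl,w)\in\mfL^-\times\cA_c$, as a basis). Everything else is a routine manipulation of commutative binomial coefficients; as a sanity check one can verify directly that the resulting formula is compatible with the non-commutation relation \eqref{non-commutation_2}.
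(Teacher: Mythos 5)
Your proof is correct and follows essentially the same route as the paper: an induction on the number of $b$-letters in $u$, peeling off one $b_i$ at a time, applying the single-letter relation $\bn b_i=b_i\bn+(\bn-e_i)$, and recombining the binomial coefficients via Pascal's rule (the paper phrases this step as a Chu--Vandermonde identity for the operator identity $D^{(\mathbf{m})}\partial^{u}=\sum_{\ell}\binom{u}{\ell}\partial^{u-\ell}D^{(\mathbf{m}-\ell)}$ applied to $z_{(\mfl,0)}$, which is the mirror image of your normal-form computation in $\cA$).
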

\begin{proof}
One first notices that
\begin{equation*}
	z_{(\mfl,w)} = \prod_{i=0}^d \partial_i^{u[b_i]}
	\prod_{\bn \in \mathbb{N}^{d+1}}  (D^{(\bn)})^{v[\bn]} z_{(\mfl,0)}.
\end{equation*}
Then, it boils down to show that
\begin{equation*}
	D^{(\mathbf{m})} \partial^{u}
 = 	\sum_{\ell \in \mathbb{N}^{d+1}} { u \choose \ell} 	 \partial^{u-\ell}
	 D^{(\mathbf{m}-\ell)}.
\end{equation*}
	When one has $ u=b_i $,
		this is a consequence of the fact that
		\begin{equation*}
			D^{(\mathbf{m})} \partial^{b_i} = \partial^{b_i} D^{(\mathbf{m})} + D^{(\mathbf{m}-e_i)}.
		\end{equation*}
	We proceed by recurrence and we suppose it true for $u \in \mathbb{R}^{d+1}$. One has 
	\begin{equs}
		D^{(\mathbf{m})} \partial^{u + b_i}
		 & =  \sum_{\ell \in \mathbb{N}^{d+1}} { u \choose \ell} 	 \partial^{u-\ell}
		D^{(\mathbf{m}-\ell)} \partial^{b_i}
		\\
		&= \sum_{r \in \mathbb{N}^{d+1}} \sum_{\ell \in \mathbb{N}^{d+1}} { u \choose \ell} { e_i \choose r}	 \partial^{(u-\ell)+(b_i-r)}
		D^{(\mathbf{m}-\ell-r)} 
		\\
		& =  \sum_{k \in \mathbb{N}^{d+1}}{ u+e_i \choose k}	 \partial^{u+b_i-k}
		D^{(\mathbf{m}-k)}
	\end{equs}
		where we have used the Chu-Vandermonde identity that tells us that
		\begin{equation*}
			\sum_{\ell+r = k}  {u \choose \ell} {e_i \choose r} =   {u + b_i \choose k}.
		\end{equation*}
	\end{proof}

Similar to the ODE multi-indices, the symmetry factor for SPDE multi-indices is also necessary for defining the pairing in duality.
We define the symmetry factors as the following:
 $ S(z^\beta) $ is the symmetry factor of the multi-indice  $ z^\beta$ given by
\begin{equs} \label{symmetry_SPDE}
	S(z^\beta) := \prod_{(\mfl,w) \in \mfL^- \times \cA_c}  \left(w ! \right)^{\beta(\mfl,w)}.
\end{equs}
For $w = b^k \prod_{i=1}^n {\bf n}_i^{\beta_i}$ where the $ {\bf n}_i $ are disjoints, one has
\begin{equs}
	w !  = k! \prod_{i=1}^n \beta_i !.
	\end{equs}
Here $ b^{k} $ is a short hand notation for $ \prod_i b_i^{k_i} $ and we use the convention that $k! := \prod_i k_i!$. 
It can be extended to the forest of multi-indices by
\begin{equs} \label{symmetry_SPDE_forest}
	S\left(\partial^k  \, \tilde{\prod}_{j=1}^n\left(z^{\beta_j}  D^{(\bn_j)}\right)^{r_j}\right) :=k! \prod_{j=1}^nr_j!S\left(z^{\beta_j}\right)^{r_j},
\end{equs}
where $z^{\beta_j}$ are disjoint. Accordingly, the pairing of (forests of) SPDE multi-indices is defined as
\begin{equs} \label{inner_product_SPDE}
	\langle z^{\beta_1}, z^{\beta_2} \rangle = S(z^{\beta_1})\delta^{z^{\beta_1}}_{z^{\beta_2}}.
\end{equs}

\section{Explicit formulae for SPDE multi-indices coproducts}\label{Sec::5}
Similar to ODEs, there are two coproducts in SPDEs, which were initially introduced in \cite{BHZ} using decorated rooted trees. They can be viewed as the Regularity Structures analogues of Butcher-Connes-Kreimer coproduct and extraction-contraction coproduct. 
We propose in this section explicit formulae for these coproducts in the context of multi-indices.  

	From this section on we do not need the coproducts or products in the ODE version. Therefore,for simplicity in notations, we will use again $\Delta$, $\star_2$, $\Delta^{\! -}$, and $\star_1$. However, they are all in the SPDE cases.
	
The Butcher-Connes-Kreimer type coproduct denoted $ \Delta $ in  \cite{LOT} is the dual to of the associative product $ \star_2$ defined on forest of multi-indices. It is given by
\begin{equs} \label{definition_star_2_spde}
\partial^k	\tilde{\prod}_{i=1}^r z^{\beta_i} D^{(\bn_i)} \star_2 z^{\beta} = \prod_{i=1}^n z^{\beta_i} \partial^k \prod_{i=1}^r D^{(\bn_i)} z^{\beta}.
\end{equs}
We have the same convention as the ODE version that 
\begin{equs} \label{convention_SPDE}
	z^\mathbf{0} \star_2 z^{\beta} =  z^{\beta}, \text{ and} \quad	
	\partial^k	\tilde{\prod}_{i=1}^r z^{\beta_i} D^{(\bn_i)} \star_2 z^\mathbf{0} = \partial^k	\tilde{\prod}_{i=1}^r z^{\beta_i} D^{(\bn_i)}
\end{equs}
By the duality and pairing, one has
\begin{equs} \label{dual_Delta_spde}
	\langle	\partial^k	\tilde{\prod}_{i=1}^r z^{\beta_i} D^{(\bn_i)}  \star_2 z^{\bar{\beta}}, z^{\beta} \rangle 
	= \langle \partial^k	\tilde{\prod}_{i=1}^r z^{\beta_i} D^{(\bn_i)} \otimes z^{\bar{\beta}}, \Delta z^{\beta} \rangle. 
\end{equs}

One can notice that by going to the dual, one has infinite sum for $ \Delta $. This is due to the defintion of the operator $ D^{(\bn_i)} $ that decreases the decoration on a node (see Proposition~\ref{derivation_basis}). The same issue has been observed in \cite{BHZ} and was solved by using a bigrading (see \cite[Sec; 2.3]{BHZ}). One can do the same and propose a bigrading
whose first component will be the sum over the norms of the letter $ \bn $ that appear in $z^{\beta}$ and the second component is the number of letters $ \bn $ plus the number of nodes.

Using the fact that \eqref{dual_Delta_spde} is quite explicit, we want to derive an explicit formula for the coproduct $ \Delta $ through the equality \eqref{dual_Delta_spde}.

\begin{proposition} \label{LOT_SPDE}
	The explicit formula of the Linares–Otto–Tempelmayr coproduct in the context of SPDEs is
		\begin{equs} \label{SPDE_coproduct_BCK_1}
		\begin{aligned}
			\Delta z^{\beta}  =& z^{\mathbf{0}} \otimes z^\beta + z^\beta \otimes z^{\mathbf{0}} +
			\sum_{\partial^k \tilde{\prod}_{i=1}^r z^{\beta_i} D^{(\bn_i)} \in \CF }
			\sum_{z^{\bar\beta} \in \mathcal{M}_{\mathcal{R}_c}}
			\\ &
			\frac{S(z^{\beta})}
			{S( \partial^k \tilde{\prod}_{i=1}^r z^{\beta_i} D^{(\bn_i)} ) S(z^{\bar{\beta}})}
			  \frac{\langle \partial^k \prod_{i=1}^r D^{(\bn_i)} z^{\bar{\beta}}, z^{\hat\beta} \rangle}{S(z^{\hat\beta})}  \partial^k \tilde{\prod}_{i=1}^r z^{\beta_i} D^{(\bn_i)}   \otimes  z^{\bar{\beta}},
		\end{aligned}		 
	\end{equs}
	where $\hat{\beta}  = \beta -\sum_{i=1}^r \beta_i$. 
\end{proposition}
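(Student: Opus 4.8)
The plan is to follow the exact same strategy as in the proof of Proposition~\ref{LOT_coproduct}, the ODE analogue, since the SPDE product $\star_2$ in \eqref{definition_star_2_spde} is still given by an explicit formula: applying derivations $\partial^k$ and $\prod_{i=1}^r D^{(\bn_i)}$ to a multi-indice and then multiplying by $\prod_{i=1}^r z^{\beta_i}$. The only genuinely new features are (i) the presence of the extra derivations $\partial_i$ carrying the decoration $k$, (ii) the fact that the $D^{(\bn_i)}$ decrease decorations so the dual sum over $z^{\bar\beta}$ is a priori infinite — but this is harmless since for fixed $z^\beta$ on the left only finitely many terms contribute, and it is precisely the reason one introduces the bigrading mentioned just above the statement, and (iii) the symmetry factor $S$ for SPDE multi-indices, which is still multiplicative in the relevant sense.

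First I would verify that the right-hand side of \eqref{SPDE_coproduct_BCK_1} ranges over all pairs $\bigl(\partial^k \tilde{\prod}_{i=1}^r z^{\beta_i} D^{(\bn_i)},\, z^{\bar\beta}\bigr) \in \CF \times \mathcal{M}_{\mathcal{R}_c}$ whose $\star_2$-product contains the monomial $z^\beta$ with nonzero coefficient: indeed, any such product has the form $\prod_{i=1}^r z^{\beta_i}$ times a linear combination of monomials obtained by applying $\partial^k \prod_{i=1}^r D^{(\bn_i)}$ to $z^{\bar\beta}$, so the monomials appearing are exactly those of the form $z^{\hat\beta}$ with $\hat\beta = \beta - \sum_{i=1}^r \beta_i$ populated in the appropriate sense. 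So, as in the ODE case, the proof reduces to checking the adjoint identity \eqref{dual_Delta_spde} term by term. The primitive part $z^{\mathbf 0}\otimes z^\beta + z^\beta\otimes z^{\mathbf 0}$ is handled immediately by the convention \eqref{convention_SPDE}.

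For the non-primitive terms, I would compute the left-hand side of \eqref{dual_Delta_spde}. Expanding $\partial^k \tilde{\prod}_{i=1}^r z^{\beta_i} D^{(\bn_i)} \star_2 z^{\bar\beta}$ via \eqref{definition_star_2_spde} as a sum over monomials $z^\beta$ weighted by $\langle\,\cdot\,,z^\beta\rangle/S(z^\beta)$, and then using multiplicativity of $S$ — i.e. $S(z^{\beta_i} z^{\bar\beta}) = S(z^{\beta_i})S(z^{\bar\beta})$, which holds for SPDE multi-indices by \eqref{symmetry_SPDE} since the symmetry factor is a product over $(\mfl,w)$ and the supports add — I would factor out $\prod_{i=1}^r z^{\beta_i}$ and rewrite the sum over non-populated remainders, obtaining
\begin{equs}
	\langle \partial^k \tilde{\prod}_{i=1}^r z^{\beta_i} D^{(\bn_i)} \star_2 z^{\bar\beta},\, z^\beta\rangle
	= S(z^\beta)\,\frac{\langle \partial^k \prod_{i=1}^r D^{(\bn_i)} z^{\bar\beta},\, z^{\beta - \sum_i \beta_i}\rangle}{S(z^{\beta - \sum_i \beta_i})}.
\end{equs}
On the other hand, writing $\Delta z^\beta$ with unknown coefficients $C(\partial^k \tilde{\prod}_i z^{\beta_i} D^{(\bn_i)}, z^{\bar\beta}, z^\beta)$ and pairing against $\partial^k \tilde{\prod}_i z^{\beta_i} D^{(\bn_i)} \otimes z^{\bar\beta}$ picks out that coefficient times $S(\partial^k \tilde{\prod}_i z^{\beta_i} D^{(\bn_i)}) S(z^{\bar\beta})$, using the forest symmetry factor \eqref{symmetry_SPDE_forest}. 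Equating the two expressions via \eqref{dual_Delta_spde} and solving for $C$ yields exactly the coefficient displayed in \eqref{SPDE_coproduct_BCK_1}, which completes the proof.

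The main obstacle, modest but real, is bookkeeping: one must be careful that the forest symmetry factor \eqref{symmetry_SPDE_forest} correctly accounts for repetitions among the decorated factors $z^{\beta_i} D^{(\bn_i)}$ (two factors count as equal only when both the multi-indice and the decoration $\bn_i$ agree), and that the decoration $k$ attached to $\partial^k$ is treated consistently on both sides of the pairing — it is not extracted or split, it simply rides along as part of the forest, which is why $S$ contributes the factor $k!$ in \eqref{symmetry_SPDE_forest}. Beyond that, the argument is a direct transcription of the ODE proof, with the infinite dual sum over $z^{\bar\beta}$ being the only conceptual point, already addressed by the bigrading remark preceding the statement.
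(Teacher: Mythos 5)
Your proposal is correct and follows essentially the same route as the paper's own proof: reduce to the adjoint identity \eqref{dual_Delta_spde}, expand the explicit product \eqref{definition_star_2_spde} over monomials using multiplicativity of the symmetry factor, and solve for the coefficients $C$ by pairing against $\partial^k \tilde{\prod}_{i=1}^r z^{\beta_i} D^{(\bn_i)} \otimes z^{\bar\beta}$. The additional remarks on the bigrading and on the bookkeeping of the decorations $k$ and $\bn_i$ in the forest symmetry factor are consistent with the paper's discussion preceding the statement.
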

\begin{proof}
First of all, one can check that $	\sum_{\partial^k \tilde{\prod}_{i=1}^r z^{\beta_i} D^{(\bn_i)} \in \CF } \sum_{z^{\bar\beta} \in \mathcal{M}_{\mathcal{R}_c}}$ together with the primitive elements cover all the forest and trunk such that their product $\star_2$ contains the $z^\beta$ term. The adjoint relationship of the primitive elements is trivial.
Therefore, we only deal with other terms and start with the left-hand side of \eqref{dual_Delta_spde}. 
One can observe that the product can be rewritten as a sum over all non-populated standardised SPDE multi-indices $z^{\check{\beta}}$:
\begin{equs}
	\partial^k	\tilde{\prod}_{i=1}^r z^{\beta_i} D^{(\bn_i)}  \star_2 z^{\bar{\beta}}  & = \prod_{i=1}^n z^{\beta_i} \partial^k \prod_{i=1}^r D^{(\bn_i)} z^{\bar\beta} \\
	& = \sum_{z^\beta \in \mathcal{M}_{\mathcal{R}_c}} \frac{\langle \prod_{i=1}^n z^{\beta_i} \partial^k \prod_{i=1}^r D^{(\bn_i)} z^{\bar\beta}, z^{{\beta}}  \rangle}{S(z^{{\beta}})} z^{{\beta}}
	\\
	& = \prod_{i=1}^n z^{\beta_i} \sum_{z^{\check{\beta}}}  \frac{\langle \partial^k \prod_{i=1}^r D^{(\bn_i)} z^{\bar\beta}, z^{\check{\beta}} \rangle}{S(\check{\beta})} z^{\check{\beta}}
\end{equs} 	
where from the second line to the third line, we have used the fact that the symmetry factor $S$ is multiplicative in the sense that one has
\begin{equs}
	S(z^{\beta_i} z^{\bar{\beta}}) = 	S(z^{\beta_i}) S(z^{\bar{\beta}}).
\end{equs}
Then, one has
\begin{equs}
	& \langle \partial^k	\tilde{\prod}_{i=1}^r z^{\beta_i} D^{(\bn_i)}  \star_2 z^{\bar{\beta}} ,	 z^{\beta} \rangle
	= S(z^{\beta})  \frac{\langle \partial^k \prod_{i=1}^r D^{(\bn_i)} z^{\bar\beta}, z^{\beta-\sum_{i=1}^r \beta_i} \rangle}{S(z^{\beta-\sum_{i=1}^r \beta_i})}.
\end{equs}
On the other hand, the coproduct can be viewed as the sum of ``admissible cuts" over the multi-index $z^\beta$
\begin{equs}
	\Delta z^{\beta} 
	=&
	z^{\mathbf{0}} \otimes z^\beta + z^\beta \otimes z^{\mathbf{0}}+
	\sum_{\partial^k \tilde{\prod}_{i=1}^r z^{\beta_i} D^{(\bn_i)} \in \CF }
	 \sum_{z^{\bar\beta} \in \CM_{\mathcal{R}_c}}
	\\&
	C\left( \partial^k	\tilde{\prod}_{i=1}^r z^{\beta_i} D^{(\bn_i)},z^{\bar{\beta}},z^{\beta} \right)
	  \partial^k	\tilde{\prod}_{i=1}^r z^{\beta_i} D^{(\bn_i)} \otimes z^{\bar{\beta}}.
\end{equs}
Now, we can use duality to compute these coefficients $C$. From
\begin{equs}
	&\langle \partial^k	\tilde{\prod}_{i=1}^r z^{\beta_i} D^{(\bn_i)} \otimes z^{\bar{\beta}} ,	\Delta z^{\beta} \rangle 
	\\=& S( \partial^k	\tilde{\prod}_{i=1}^r z^{\beta_i} D^{(\bn_i)}) S(z^{\bar{\beta}})  C\left( \partial^k	\tilde{\prod}_{i=1}^r z^{\beta_i} D^{(\bn_i)},z^{\bar{\beta}},z^{\beta} \right).
\end{equs}
one can deduce that
\begin{equs}
	& S( \partial^k	\tilde{\prod}_{i=1}^r z^{\beta_i} D^{(\bn_i)}) S(z^{\bar{\beta}})  C\left( \partial^k	\tilde{\prod}_{i=1}^r z^{\beta_i} D^{(\bn_i)},z^{\bar{\beta}},z^{\beta} \right)
	\\=&
	S(z^{\beta})  \frac{\langle \partial^k \prod_{i=1}^r D^{(\bn_i)} z^{\bar\beta}, z^{\beta-\sum_{i=1}^r \beta_i} \rangle}{S(z^{\beta-\sum_{i=1}^r \beta_i})}
\end{equs}
which implies
\begin{equs}
C\left( \partial^k	\tilde{\prod}_{i=1}^r z^{\beta_i} D^{(\bn_i)},z^{\bar{\beta}},z^{\beta} \right) & =
	\frac{S(z^{\beta})}{	S( \partial^k	\tilde{\prod}_{i=1}^r z^{\beta_i} D^{(\bn_i)}) S(z^{\bar{\beta}})}  \frac{\langle \partial^k \prod_{i=1}^r D^{(\bn_i)} z^{\bar\beta}, z^{\hat\beta} \rangle}{S(z^{\hat\beta})}.
\end{equs}
Recall that $ \hat{\beta} = \beta - \sum_{i=1}^r \beta_i$.
\end{proof}

	We define the adjoint maps $ D^{(\bn)} $ and $ \partial^k $ denoted by $ \bar{D}^{(\bn)}  $ and $ \bar{\partial}^k $ as
	\begin{equs} \label{adjoint_SPDE1}
		\langle	\prod_{i=1}^rD^{(\bn_i)} z^{\bar\beta}, z^{\hat{\beta}} \rangle  = 	\langle	 z^{\bar\beta}, \prod_{i=1}^r\bar{D}^{(\bn_i)}  z^{\hat{\beta}} \rangle, \quad \langle	\partial^k  z^{\bar\beta}, z^{{\beta}} \rangle  = 	\langle	 z^{\bar\beta},  \bar{\partial}^k z^{{\beta}} \rangle
	\end{equs}
	where $\beta$ and $ \bar{\beta} $ are populated multi-indices and $ \hat{\beta} $ is such that $[\hat\beta] = r+1$. 
	\begin{proposition} \label{adjoint_SPDEs}
		One has the following explicit expression for the coefficents describing $ \bar{\partial}^k$ and $ \bar{D}^{(\bn)} $, where $k \in \mathbb{N}^{d+1}$.
		\begin{equs}
			\bar{\partial}^k = \prod_{i=0}^d \bar{\partial}^{k_i}_i = \prod_{i=0}^d\left(\sum_{(\mfl, uv)
				\in \mfL^- \times \cA_c}
			u[b_i]\frac{(\hat\beta(\mfl,(u-e_i)v)+1)}{\hat\beta(\mfl,uv)} z_{(\mfl,(u-e_i)v)}\partial_{z_{(\mfl,uv)}}
			\right)^{k_i},
		\end{equs}
		\begin{equs}
			\bar{D}^{(\bn)}=& 
			\sum_{\ell \in \mathbb{N}^{d+1}}
			\sum_{(\mfl, (u - \ell ) (\bn-\ell) v  )
				\in \mfL^- \times \cA_c}
			\frac{(v[\bn-\ell]+1)(\hat\beta(\mfl,uv)+1)}{\ell!\hat\beta(\mfl, (u - \ell ) (\bn-\ell) v  )} z_{(\mfl,uv)}\partial_{z_{(\mfl, (u - \ell ) (\bn-\ell) v  )}}
		\end{equs}
		where $\beta$ is a populated multi-indice and $ \hat{\beta} $ is such that $[\beta] = n+1$.
	\end{proposition}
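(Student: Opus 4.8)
The strategy is to compute both adjoint maps in the standardised basis $\cA_c$ by the same template used for $\bar D$ in Proposition~\ref{bar_D}: fix a target multi-indice $z^{\hat\beta}$, determine which populated $z^{\bar\beta}$ can produce a given $z^{\hat\beta}$-component under $\partial^k$ (respectively $\prod D^{(\bn_i)}$), and then match the two pairings $\langle \partial^k z^{\bar\beta}, z^{\hat\beta}\rangle = \langle z^{\bar\beta}, \bar\partial^k z^{\hat\beta}\rangle$ using the explicit symmetry factors \eqref{symmetry_SPDE}. The multiplicativity $S(z^\beta z^{\bar\beta}) = S(z^\beta)S(z^{\bar\beta})$ and the product formula $w! = k!\prod_i \beta_i!$ for $w = b^k\prod {\bf n}_i^{\beta_i}$ are the only structural inputs needed; everything reduces to computing $w!/w'!$ for two words differing by a single letter.

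First I would treat $\bar\partial_i$ (the case $k = e_i$), since the general $\bar\partial^k$ follows by iterating and using that the $\partial_i$ commute. By \eqref{derivations_reg}, $\partial_i z_{(\mfl,w)} = z_{(\mfl,b_iw)} = z_{(\mfl,(u+e_i)v)}$ in the standardised basis, so $\partial_i$ raises the $b_i$-count of $u$ by one. Fix $z^{\hat\beta}$ and $z^{\bar\beta}$ differing by one letter in a single slot: $\hat\beta(\mfl,uv) = \bar\beta(\mfl,uv)+1$ and $\hat\beta(\mfl,(u-e_i)v) = \bar\beta(\mfl,(u-e_i)v)-1$. Applying Leibniz to $\partial_i$ acting on the product $z^{\bar\beta}$, the coefficient of $z^{\hat\beta}$ is $\bar\beta(\mfl,(u-e_i)v)$, so $\langle \partial_i z^{\bar\beta}, z^{\hat\beta}\rangle = \bar\beta(\mfl,(u-e_i)v)\,S(z^{\hat\beta})$. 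Writing $S(z^{\hat\beta})$ in terms of $S(z^{\bar\beta})$ via the single-word ratio $(uv)!/((u-e_i)v)! = u[b_i]$ (since only the $b_i$-multiplicity of $u$ changes and $((u+e_i)v)!/(uv)! = u[b_i]+1$ in the convention $k!=\prod k_j!$), I get $S(z^{\hat\beta}) = u[b_i]\,S(z^{\bar\beta})$. On the other side, writing $\bar\partial_i z^{\hat\beta} = \sum K_{(\hat\beta,\mfl,uv)} z_{(\mfl,(u-e_i)v)}\partial_{z_{(\mfl,uv)}} z^{\hat\beta}$, Leibniz gives $\langle z^{\bar\beta}, \bar\partial_i z^{\hat\beta}\rangle = \hat\beta(\mfl,uv)\,K_{(\hat\beta,\mfl,uv)}\,S(z^{\bar\beta})$. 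Equating the two and using $\bar\beta(\mfl,(u-e_i)v) = \hat\beta(\mfl,(u-e_i)v)+1$ yields $K_{(\hat\beta,\mfl,uv)} = u[b_i]\,(\hat\beta(\mfl,(u-e_i)v)+1)/\hat\beta(\mfl,uv)$, which is the claimed formula. The product over $i$ with exponents $k_i$ then gives $\bar\partial^k$.

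For $\bar D^{(\bn)}$ I would use the expansion $D^{(\bn)} = \sum_{\ell\in\mathbb{N}^{d+1}} D^{(\bn)}_\ell$ of Proposition~\ref{derivation_basis}, where $D^{(\bn)}_\ell z_{(\mfl,w)} = \binom{u}{\ell} z_{(\mfl,(u-\ell)(\bn-\ell)v)}$, so that $D^{(\bn)}$ sends a variable indexed by $uv$ to one indexed by $(u-\ell)(\bn-\ell)v$. Fixing $z^{\bar\beta}$ and $z^{\hat\beta}$ differing by one letter — with $z^{\hat\beta}$ carrying the variable $z_{(\mfl,uv)}$ where $z^{\bar\beta}$ carries $z_{(\mfl,(u-\ell)(\bn-\ell)v)}$ — and applying Leibniz as before, the $z^{\hat\beta}$-coefficient of $D^{(\bn)}z^{\bar\beta}$ picks up $\binom{u-\ell}{?}$-type binomials together with the multiplicity factor $\bar\beta(\mfl,(u-\ell)(\bn-\ell)v)$; the delicate bookkeeping is the word-factorial ratio $(uv)!/((u-\ell)(\bn-\ell)v)!$, which introduces the $1/\ell!$ and the $(v[\bn-\ell]+1)$ coming from the change in the $\mathbb{N}^{d+1}$-part of the word (the new letter $\bn-\ell$ merging into the monoid $M(\mathbb{N}^{d+1})$). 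Matching against $\langle z^{\bar\beta}, \bar D^{(\bn)}z^{\hat\beta}\rangle = \hat\beta(\mfl,\cdot)\,K\,S(z^{\bar\beta})$ as above then isolates the coefficient and sums over $\ell$, giving the stated expression; the restriction $[\hat\beta] = n+1$ is exactly what makes the relevant pairing nonzero (degree count under a single application of the derivation).

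The main obstacle I anticipate is the combinatorial accounting in the $\bar D^{(\bn)}$ case: one must carefully track how $w!$ changes when a letter $\bn - \ell \in \mathbb{N}^{d+1}$ is removed and $\ell$ copies of the $b$-letters are reattached, in particular getting the factors $1/\ell!$ and $(v[\bn-\ell]+1)$ right, and one must check that the binomial $\binom{u}{\ell}$ from Proposition~\ref{derivation_basis} combines correctly with the Leibniz multiplicities so that no spurious $\ell$-dependence survives beyond what is written. The $\bar\partial^k$ case is comparatively routine once $\bar\partial_i$ is done, since the $\partial_i$ commute and act only on the $b$-part of $w$.
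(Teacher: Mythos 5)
Your plan is correct and follows essentially the same route as the paper's proof: fix a pair $z^{\bar\beta}$, $z^{\hat\beta}$ differing in a single variable slot, compute $\langle \partial_i z^{\bar\beta}, z^{\hat\beta}\rangle$ (resp.\ $\langle D^{(\bn)}_\ell z^{\bar\beta}, z^{\hat\beta}\rangle$) via the Leibniz multiplicity and the single-word factorial ratio of the symmetry factors, then solve for the unknown coefficient $K$ in the ansatz for the adjoint. In particular you correctly locate the origin of each factor \dash $u[b_i]$ from $(uv)!/((u-e_i)v)!$, the $1/\ell!$ from $\binom{u}{\ell}(u-\ell)!/u!$, and $(v[\bn-\ell]+1)$ from the change in the $M(\mathbb{N}^{d+1})$-part of the word \dash which is exactly the bookkeeping the paper carries out.
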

	\begin{proof}
	(1)	Suppose that $z^{\bar\beta}$ is obtained by removing one letter $b_i$ from a variable $z_{(\mfl,uv)}$ in $z^{\hat\beta}$, which means
	\begin{equs}
	\hat\beta(\mfl,uv) = \bar\beta(\mfl,uv) + 1, \quad \hat\beta(\mfl,(u-e_i)v) = \bar\beta(\mfl,(u-e_i)v) - 1.
	\end{equs}
	Then,
	\begin{equs}
	\langle \partial_i z^{\bar\beta}, z^{\hat\beta} \rangle &= \bar\beta(\mfl,(u-e_i)v) S(z^{\hat\beta}) 
	\\&= \bar\beta(\mfl,(u-e_i)v)\frac{u[b_i]!}{(u[b_i]-1)!}S(z^{\bar\beta})
	\\&=
	(\hat\beta(\mfl,(u-e_i)v)+1)u[b_i]S(z^{\bar\beta})
	.
	\end{equs}
	As for the other inner product, suppose we have for some constants $K$,
	\begin{equs}
	\bar\partial_i z^{\hat\beta} =  \sum_{(\mfl, uv)
		\in \mfL^- \times \cA_c} K_{(\mfl, uv)} z_{(\mfl,(u-e_i)v)}\partial_{z_{(\mfl,uv)}} z^{\hat\beta}
	\end{equs}
	which indicates
	\begin{equs}
	\langle	z^{\bar\beta},\bar\partial_i z^{\hat\beta} \rangle = \hat\beta(\mfl,uv) K_{(\mfl, uv)} S(z^{\bar\beta})
	\end{equs}
	Therefore, by the equality, 
	\begin{equs}
	K_{(\mfl, uv)} = u[b_i]\frac{(\hat\beta(\mfl,(u-e_i)v)+1)}{\hat\beta(\mfl,uv)}
	\end{equs}
	Finally, through induction and by the Leibniz rule, we can conclude
	\begin{equs}
		\langle \partial^k z^{\bar\beta}, z^{\hat\beta} \rangle = \langle	z^{\bar\beta},\bar\partial^k z^{\hat\beta} \rangle.
	\end{equs} 
		
	(2)	Suppose that $z^{\hat\beta}$ is obtained by changing one variable $z_{(\mfl,w)}$ in $z^{\bar\beta}$ to $z_{(\mfl, (u - \ell ) (\bn-\ell) v  )}$ with $w = uv$ and $\ell \in \mathbb{N}^{d+1}$, which means
		\begin{equs}
			\hat{\beta}(\hat{\mfl},\hat{w}) = 
			\begin{matrix}	
				\begin{cases}
					\bar\beta(\hat{\mfl},\hat{w}), \quad \text{ if } (\hat{\mfl},\hat{w}) \ne (\mfl,w) \text{ and } (\hat\mfl,\hat{w} ) \ne (\mfl, (u - \ell ) (\bn-\ell) v  )\\			
					\bar\beta(\mfl,w)-1, \quad \text{ if } (\hat{\mfl},\hat{w}) = (\mfl,w) \\
					\bar\beta(\mfl, (u - \ell ) (\bn-\ell) v  )+1, \quad \text{ if } (\hat{\mfl},\hat{w}) = (\mfl, (u - \ell ) (\bn-\ell) v  ) \\	
				\end{cases}
			\end{matrix}.
		\end{equs}
		Then,
		\begin{equs}
			\langle D^{(\bn)}z^{\bar\beta}, z^{\hat\beta} \rangle &=  { u \choose \ell} \bar\beta(\mfl,w) S(z^{\hat\beta})
			\\&= { u \choose \ell} \bar\beta(\mfl,w) \frac{(u-\ell)!(v[\bn-\ell]+1)!}{u!v[\bn-\ell]!}S(z^{\bar\beta})
			\\&= { u \choose \ell} \bar\beta(\mfl,w) \frac{(u-\ell)!(v[\bn-\ell]+1)}{u!}S(z^{\bar\beta})
		\end{equs}
		Meanwhile, the inner product in dual side yields
		\begin{equs}
			\langle	 z^{\bar\beta}, \bar{D}^{(\bn)}  z^{\hat{\beta}} \rangle  = K \hat\beta(\mfl, (u - \ell ) (\bn-\ell) v  ) S(z^{\bar\beta}),
		\end{equs}
		for some constant K.
		The equality $\langle	D^{(\bn)} z^{\bar\beta}, z^{\hat{\beta}} \rangle  = 	\langle	 z^{\bar\beta}, \bar{D}^{(\bn)}  z^{\hat{\beta}} \rangle $ tells us that 
		\begin{equs}
			K &= { u \choose \ell} \frac{(v[\bn-\ell]+1)(u-\ell)!\bar\beta(\mfl,w)}{u!\hat\beta(\mfl, (u - \ell ) (\bn-\ell) v  )} 
			\\&=
			{ u \choose \ell} \frac{(v[\bn-\ell]+1)(u-\ell)!(\hat\beta(\mfl,w)+1)}{u!\hat\beta(\mfl, (u - \ell ) (\bn-\ell) v  )}
			\\&=
			\frac{(v[\bn-\ell]+1)(\hat\beta(\mfl,w)+1)}{\ell!\hat\beta(\mfl, (u - \ell ) (\bn-\ell) v  )}
			.
		\end{equs}
	\end{proof}
By using adjoint maps, one can simplify the formula in the following way.
\begin{theorem} \label{LOT_SPDE_A}
Another explicit formula of the Linares–Otto–Tempelmayr coproduct in the context of SPDEs is
	\begin{equs} \label{SPDE_coproduct_BCK_2}
			&\Delta z^{\beta}  = z^{\mathbf{0}} \otimes z^\beta 
			+ z^\beta \otimes z^{\mathbf{0}}
			\\ &
			+
			\sum_{\partial^k \tilde{\prod}_{i=1}^r z^{\beta_i} D^{(\bn_i)} \in \CF }		
			\frac{1}{	S_{\tiny{\text{ext}}}( \partial^k \tilde{\prod}_{i=1}^r z^{\beta_i} D^{(\bn_i)} ) } 
			   \partial^k	\tilde{\prod}_{i=1}^r z^{\beta_i} D^{(\bn_i )}   \otimes  \prod_{i=1}^r\bar{D}^{(\bn_i)} \bar{\partial}^k   z^{\hat{\beta}}	 
	\end{equs}
	where $\hat{\beta}  = \beta -\sum_{i=1}^r \beta_i$.  
\begin{proof}
	The proof is the same as the ODE version in Theorem~\ref{alternative_LOT}. We apply the adjoint relation \eqref{adjoint_SPDE1} to the formula in Proposition~\ref{adjoint_SPDEs}.
\end{proof}
\end{theorem}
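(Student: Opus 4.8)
The plan is to repeat, in the SPDE setting, the argument by which Theorem~\ref{alternative_LOT} was obtained from Proposition~\ref{LOT_coproduct} in the ODE case. Starting from the explicit formula \eqref{SPDE_coproduct_BCK_1} of Proposition~\ref{LOT_SPDE}, I would first move the operators onto the second tensor factor by adjunction: applying the two identities of \eqref{adjoint_SPDE1} in turn — transferring $\partial^k$ first, then the commuting block $\prod_{i=1}^r D^{(\bn_i)}$ — gives $\langle \partial^k \prod_{i=1}^r D^{(\bn_i)} z^{\bar\beta}, z^{\hat\beta}\rangle = \langle z^{\bar\beta}, \prod_{i=1}^r \bar{D}^{(\bn_i)}\, \bar\partial^k\, z^{\hat\beta}\rangle$, with the coefficients of $\bar{D}^{(\bn_i)}$ and $\bar\partial^k$ supplied by Proposition~\ref{adjoint_SPDEs}; the $D^{(\bn)}$, hence their adjoints, commute among themselves so the order of the $\bar{D}^{(\bn_i)}$ is immaterial, whereas the non-commutation \eqref{non-commutation_2} only fixes the relative position of $\bar\partial^k$, which must act innermost. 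Next I would deal with the symmetry factors: since $\beta = \beta_1 + \cdots + \beta_r + \hat\beta$ and $S$ is multiplicative, $S(z^\beta) = \big(\prod_{i=1}^r S(z^{\beta_i})\big) S(z^{\hat\beta})$, so the factor $S(z^{\hat\beta})$ cancels $1/S(z^{\hat\beta})$, and using $S(\prod_{i=1}^r z^{\beta_i}) = \prod_{i=1}^r S(z^{\beta_i})$ together with the definition of $S_{\tiny{\text{ext}}}$ of a forest with derivatives, the coefficient of $\partial^k \tilde{\prod}_{i=1}^r z^{\beta_i} D^{(\bn_i)} \otimes z^{\bar\beta}$ in \eqref{SPDE_coproduct_BCK_1} becomes
\begin{equs}
\frac{1}{S_{\tiny{\text{ext}}}( \partial^k \tilde{\prod}_{i=1}^r z^{\beta_i} D^{(\bn_i)} )}\, \frac{\langle z^{\bar\beta},\, \prod_{i=1}^r \bar{D}^{(\bn_i)}\bar\partial^k z^{\hat\beta}\rangle}{S(z^{\bar\beta})}.
\end{equs}

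The final step is to sum over $z^{\bar\beta}\in\mathcal{M}_{\mathcal{R}_c}$. This sum closes up for a grading reason: $\bar\partial^k$ preserves the population functional $[\,\cdot\,]$, each $\bar{D}^{(\bn_i)}$ raises it by one, and $[\hat\beta] = [\beta] - \sum_{i=1}^r [\beta_i] = 1 - r$ by linearity of $[\,\cdot\,]$, so $X := \prod_{i=1}^r \bar{D}^{(\bn_i)}\bar\partial^k z^{\hat\beta}$ has population degree $1$, i.e.\ $X\in\mathbf{M}_{\mathcal{R}_c}$. Since the pairing \eqref{inner_product_SPDE} is diagonal with $\langle z^{\bar\beta}, z^{\bar\beta}\rangle = S(z^{\bar\beta})$, the scalar $\langle z^{\bar\beta}, X\rangle/S(z^{\bar\beta})$ is exactly the coefficient of $z^{\bar\beta}$ in $X$, whence $\sum_{z^{\bar\beta}\in\mathcal{M}_{\mathcal{R}_c}} \frac{\langle z^{\bar\beta}, X\rangle}{S(z^{\bar\beta})} z^{\bar\beta} = X$. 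Substituting this into \eqref{SPDE_coproduct_BCK_1}, and leaving the primitive part $z^{\mathbf{0}}\otimes z^\beta + z^\beta\otimes z^{\mathbf{0}}$ unchanged, produces exactly \eqref{SPDE_coproduct_BCK_2}; the outer index set is the same as in \eqref{SPDE_coproduct_BCK_1}, so well-definedness (through the bigrading discussed before Proposition~\ref{LOT_SPDE}) is inherited.

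The algebra with the symmetry factors is routine. The point I expect to require genuine care is the legitimacy of transferring the whole block $\partial^k \prod_{i=1}^r D^{(\bn_i)}$ at once in the adjunction step and, correspondingly, the well-definedness of $\prod_{i=1}^r \bar{D}^{(\bn_i)}\bar\partial^k$ as a single operator taking values in $\mathbf{M}_{\mathcal{R}_c}$: the adjoints in Proposition~\ref{adjoint_SPDEs} are characterised only sector by sector (their coefficients depend explicitly on $\hat\beta$), so one must track the population degrees along the composition and check, using \eqref{non-commutation_2}, that every intermediate expression lies in the sector for which the relevant adjoint identity was established. Once this sector bookkeeping is in place, the rest mirrors the proof of Theorem~\ref{alternative_LOT}.
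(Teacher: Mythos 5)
Your proposal is correct and follows essentially the same route as the paper: the paper's proof simply says to repeat the argument of Theorem~\ref{alternative_LOT} using the adjoint relation \eqref{adjoint_SPDE1} and Proposition~\ref{adjoint_SPDEs}, which is exactly what you do (transfer $\partial^k$ then $\prod_i D^{(\bn_i)}$ by adjunction, cancel symmetry factors via multiplicativity to produce $1/S_{\tiny{\text{ext}}}$, and resum over $z^{\bar\beta}$ to reconstruct $\prod_i\bar D^{(\bn_i)}\bar\partial^k z^{\hat\beta}$). Your additional bookkeeping of population degrees and operator ordering is a useful elaboration of details the paper leaves implicit, not a different method.
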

The following example illustrate the computation of SPDE Linares–Otto–Tempelmayr coproduct.
\begin{example}\label{SPDE_1}
	Suppose there are only two variables $x_0 = t$ and $x_1$ in the SPDE. We also assume that every variable is decorated with the same noise, therefore we omit the decoration $\mfl$ in the following calculations.
	The target is $z^\beta = z_{w_1}^2z_{w_2}z_{w_3}$ with $w_1 = b_0$, $w_2 =  \bn$, and $w_3 = b_1\mathbf{m}\mathbf{m}$, where $\bn = [1,0]$ and $\mathbf{m} = [0,1]$.
\begin{itemize}
	\item Firstly, we need to find all possible forest and use the partition $\beta  = \beta_1 + \cdots + \beta_r + \hat{\beta}$ to find $z^{\hat{\beta}}$.
		\begin{table}[H]
		\centering
		\begin{tabular}{||c c c c||} 
			\hline
			Partition $(j)$ & $r$& $\tilde\prod_{i=1}^rz^{\beta_i}$ & $z^{\hat{\beta}_j}$ \\ [0.5ex] 
			\hline\hline
			1 & 1& $z_{w_1}$  & $z_{w_1}z_{w_2}z_{w_3}$ \\[1ex] 
			\hline
			2 & 1& $z_{w_1}z_{w_2}$  & $z_{w_1}z_{w_3}$ \\ [1ex]
			\hline
			3 & 1& $z_{w_1}^2z_{w_3}$ & $z_{w_2}$ \\ [1ex]
			\hline
			4 & 2& $z_{w_1} \odot z_{w_1}$   & $z_{w_2}z_{w_3}$ \\ [1ex]
			\hline
			5 & 2& $z_{w_1}\odot z_{w_1}z_{w_2}$  & $z_{w_3}$ \\ [1ex] 
			\hline
		\end{tabular}
	\end{table}
	\item The second step is to identify $\bn_i$ (extra letter) such that $\prod_{i=1}^r\bar{D}^{(\bn_i)} \bar{\partial}^k   z^{\hat{\beta}} \ne 0$. Recall that $\bar{D}^{(\bn_i)}$ deletes one letter $\bn_i-\ell$ in a word. For example, in Partition $1$, we need to find one letter $\bn_1-\ell$ to be deleted since $r=1$.
	As $w_1$ has no letters in $v$, possibilities are deleting one letter from $w_2$ or deleting one letter from $w_3$. Therefore, possible $\bn_1-\ell$ are $\bn$ and $\mathbf{m}$. The table below shows all possible $\bn_i$ for each partition.
	\begin{table}[H]
		\centering
		\begin{tabular}{||c c c c c||} 
			\hline
			Partition $(j)$ & $r$& $\tilde\prod_{i=1}^rz^{\beta_i}$ & $z^{\hat{\beta}_j}$& $\bn_i$ \\ [0.5ex] 
			\hline\hline
			1 & 1& $z_{w_1}$  & $z_{w_1}z_{w_2}z_{w_3}$ &$\bn_1 = \bn+\ell$ or $\bn_1 = \mathbf{m}+\ell$ \\[1ex] 
			\hline
			2 & 1& $z_{w_1}z_{w_2}$  & $z_{w_1}z_{w_3}$& $\bn_1 = \mathbf{m}+\ell$  \\ [1ex]
			\hline
			3 & 1& $z_{w_1}^2z_{w_3}$ & $z_{w_2}$& $\bn_1 = \bn+\ell$  \\ [1ex]
			\hline
			4 & 2& $z_{w_1} \odot z_{w_1}$   & $z_{w_2}z_{w_3}$ & $\bn_1 = \bn+\ell_1, \bn_2 = \mathbf{m}+\ell_2$ \\
			&&&&or $\bn_1 = \mathbf{m}+\ell_1,
			\bn_2 = \mathbf{m}+\ell_2$  \\ [1ex]
			\hline
			5 & 2& $z_{w_1}\odot z_{w_1}z_{w_2}$  & $z_{w_3}$& $\bn_1 = \mathbf{m}+\ell_1,
			\bn_2 = \mathbf{m}+\ell_2$ \\ [1ex] 
			\hline
		\end{tabular}
	\end{table}
In the partition 4, we have ruled out the term $\bn_1 = \mathbf{m}+\ell_1, \bn_2 = \bn+\ell_2$ as one can check that both $\bn_1 = \mathbf{m}+\ell_1, \bn_2 = \bn+\ell_2$ and $\bn_1 = \bn+\ell_1, \bn_2 = \mathbf{m}+\ell_2$ give the same forest $z_{w_1}D^{(\bn+\ell_1 )} \odot z_{w_1}D^{(\mathbf{m}+\ell_2 )}$.
\item Then, we compute all possible $k$ such that 
$\bar{\partial}^k   z^{\hat{\beta}} \ne 0$. 
For example, in partition 1, $z^{\hat\beta_1}z_{w_1}z_{w_1}z_{w_1}$ has $b_0b_1$ 2 letters in $u$. Thus, there are $2^2 = 4$ possible $k$: $[0,0]$, $[1,0]$, $[0,1]$, $[1,1]$.
\begin{table}[H]
	\centering
	\begin{tabular}{||c c c c c||} 
		\hline
		Partition $(j)$ & $r$& $\tilde\prod_{i=1}^rz^{\beta_i}$ & $z^{\hat{\beta}_j}$& possible $k$ \\ [0.5ex] 
		\hline\hline
		1 & 1& $z_{w_1}$  & $z_{w_1}z_{w_2}z_{w_3}$ &$[0,0]$, $[1,0]$, $[0,1]$, $[1,1]$ \\[1ex] 
		\hline
		2 & 1& $z_{w_1}z_{w_2}$  & $z_{w_1}z_{w_3}$& $[0,0]$, $[1,0]$, $[0,1]$, $[1,1]$ \\ [1ex]
		\hline
		3 & 1& $z_{w_1}^2z_{w_3}$ & $z_{w_2}$& $[0,0]$  \\ [1ex]
		\hline
		4 & 2& $z_{w_1} \odot z_{w_1}$   & $z_{w_2}z_{w_3}$ & $[0,0]$, $[0,1]$ \\[1ex]
		\hline
		5 & 2& $z_{w_1}\odot z_{w_1}z_{w_2}$  & $z_{w_3}$& $[0,0]$, $[0,1]$ \\ [1ex] 
		\hline
	\end{tabular}
\end{table}
\item Finally, we combine possible $\bn_i$ and $k$ and apply formulae in Proposition~\ref{adjoint_SPDEs}. Here we only take $\bar{D}^{(\bn+\ell)} \bar{\partial}^{[0,1]}   z^{\hat{\beta}_1}$ as an example, as the calculations for other terms are the same. Since only $w_3$ has $b_1$,
\begin{equs}
	\bar{\partial}^{[0,1]}   z^{\hat{\beta}_1} = 1\frac{0+1}{1} z_{w_1}z_{w_2}z_{\mathbf{m}\mathbf{m}} = z_{b_0}z_{\bn}z_{\mathbf{m}\mathbf{m}}.
\end{equs}
Then we have
\begin{equs}
	\bar{D}^{(\bn+\ell)} z_{b_0}z_{\bn}z_{\mathbf{m}\mathbf{m}} &= 
	\sum_{\ell \in \mathbb{N}^{d+1}}
	\frac{(0+1)(\hat\beta(\ell)+1)}{\ell!1} z_{\ell}z_{b_0}z_{\mathbf{m}\mathbf{m}} 
	\\&= 2z_{b_0}^2z_{\mathbf{m}\mathbf{m}} + \sum_{\ell \ne [1,0]}\frac{1}{\ell!} z_{\ell}z_{b_0}z_{\mathbf{m}\mathbf{m}}.
\end{equs}
On the other hand, the forest is 
\begin{equs}
	\partial^{[0,1]} z_{w_1} D^{(\bn+\ell)}.
\end{equs}
Therefore the corresponding term in the coproduct is 
\begin{equs}
	2\partial^{[0,1]}z_{b_0}D^{(\bn+[1,0])} \otimes z_{b_0}^2z_{\mathbf{m}\mathbf{m}} + \sum_{\ell \ne [1,0]}\frac{1}{\ell!} \partial^{[0,1]} z_{b_0} D^{(\bn+\ell)} \otimes z_{\ell}z_{b_0}z_{\mathbf{m}\mathbf{m}}.
\end{equs}
\end{itemize}
\end{example}

We then introduce a new type of SPDE multi-indices $z^\alpha \in \CM_0$
\begin{equation*}
	z^\alpha := \prod_{w \in \cA_c} z_{(0,w)}.
\end{equation*}
The dual of the extraction-contraction corpoduct is the product $\star_1$ called simultaneous insertion  which can be derived from applying the Guin-Oudom process to the product $ \blacktriangleright $. 
For $z^\beta\in \CM_{\mathcal{R}_c}$ and $z^\alpha \in \CM_0$, $ \blacktriangleright $ is defined to be
\begin{equs} \label{insertion_SPDE}
	z^\beta \blacktriangleright z^\alpha : =\sum_{w\in \cA_c}\left(D^{w}z^{\beta}\right)
	\left(\partial_{z_{(0,w)}}z^\alpha\right),
\end{equs}	
where, for $w = uv \in \cA_c$, we define $D^{w} := \prod_{i=0}^d \partial_i^{u[b_i]}\prod_{j=1}^{|v|}D^{(v_j)}$. 
Furthermore, the product simultaneously inserting $z^{\beta_i} \in \CM_{\mathcal{R}_c}$ into $z^\alpha \in \CM_{0}$ is
\begin{equs} \label{insertion_multiple_SPDE}
	\tilde{\prod}_{i=1}^n z^{\beta_i} \star_1 z^\alpha:= \sum_{(w_i)_{i=1,...,n}}  
\prod_{i=1}^n	\left( D^{w_i} z^{\beta_i}\right)\left[\left(\prod_{i=1}^n\partial_{z_{(0,w_i)}}\right)z^\alpha\right].
\end{equs}
Similar to the ODE case, we shall require that $n = |z^\alpha|$.
As for $\Delta$, when we look at the adjoint map $\Delta^{\!-}$, one has infinite sum due to the operators $D^{w_i}$.
The same bigrading is used for making sense of these sums.
 \begin{proposition} \label{extraction_SPDE_1}
	The explicit formula of the extraction-contraction multi-indices coproduct $\Delta^{\!-} $ in the context of SPDE is 
		\begin{equs} \label{explicit_formula_extraction_1_SPDE}
			\begin{aligned}
		\Delta^{\! -} z^{\beta}  =
		\sum_{\tilde{\prod}_{i=1}^n z^{\beta_i} \in \CF}
		\sum_{\substack{w_1,...,w_n \in \cA_c\\ \prod_{i} z_{(0,w_i)} \in \CM_0}}   	
		E(\tilde{\prod}_{i=1}^nz^{\beta_i}, z^\alpha ,z^{\beta})  
		\tilde{\prod}_{i=1}^n z^{\beta_i}
		\otimes   z^\alpha, 
	\end{aligned}
	\end{equs}
	with
	\begin{equs}
	E(\tilde{\prod}_{i=1}^nz^{\beta_i}, z^\alpha ,z^{\beta}) 
		=  	\sum_{\beta  = \hat{\beta}_1 + \cdots + \hat{\beta}_n} 
		\frac{\alpha!S(z^\beta)}{S(z^\alpha)S( \tilde{\prod}_{i=1}^n z^{\beta_i})}
		\prod_{i=1}^n
		\frac{\langle  
			D^{w_i}z^{\beta_i}, z^{\hat{\beta}_i} \rangle }{S(z^{\hat{\beta}_i})},
	\end{equs}
where $z^\alpha := \prod_{i=1}^n z_{(0,w_i)}$ and we have order among $\hat{\beta}_1, \cdots , \hat{\beta}_n$.
Note that $ n \ge 1$ as we cannot insert the empty multi-indice.	
\end{proposition}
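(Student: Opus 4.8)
The plan is to transcribe, essentially line by line, the proof of Proposition~\ref{LOT_coproduct2}, replacing the scalar insertion data $k_i\in\mathbb{N}$ by words $w_i\in\cA_c$ and the operators $D^{k_i}$ by the composite operators $D^{w_i}$ introduced after \eqref{insertion_SPDE}. First I would write the most general ansatz compatible with the bigrading,
\begin{equs}
\Delta^{\!-} z^{\beta} = \sum_{\tilde{\prod}_{i=1}^n z^{\beta_i}\in\CF}\ \sum_{w_1,\dots,w_n\in\cA_c}\ \sum_{z^{\alpha}\in\CM_0} E(\tilde{\prod}_{i=1}^nz^{\beta_i},z^{\alpha},z^{\beta})\ \tilde{\prod}_{i=1}^n z^{\beta_i}\otimes z^{\alpha},
\end{equs}
and exploit the defining adjointness $\langle\tilde{\prod}_{i=1}^n z^{\beta_i}\star_1 z^{\alpha},z^{\beta}\rangle=\langle\tilde{\prod}_{i=1}^n z^{\beta_i}\otimes z^{\alpha},\Delta^{\!-}z^{\beta}\rangle$. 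By the pairing \eqref{inner_product_SPDE} together with \eqref{symmetry_SPDE_forest}, the right-hand side equals $S(\tilde{\prod}_{i=1}^n z^{\beta_i})\,S(z^{\alpha})\,E(\tilde{\prod}_{i=1}^nz^{\beta_i},z^{\alpha},z^{\beta})$, so it suffices to compute the left-hand side explicitly and read off $E$.

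Second I would expand $\tilde{\prod}_{i=1}^n z^{\beta_i}\star_1 z^{\alpha}$ from the definition \eqref{insertion_multiple_SPDE}: applying $\prod_{i=1}^n\partial_{z_{(0,w_i)}}$ to $z^{\alpha}$ through the Leibniz rule produces a sum over splittings of $\alpha$, while the factors $D^{w_i}z^{\beta_i}$ are expanded in the basis of populated SPDE multi-indices; writing each $z^{\beta}$-component via the inner product and using the multiplicativity $S(z^{\beta_i}z^{\bar\beta})=S(z^{\beta_i})S(z^{\bar\beta})$ of the symmetry factor \eqref{symmetry_SPDE}, exactly as in the ODE case, one obtains $\langle\tilde{\prod}_{i=1}^n z^{\beta_i}\star_1 z^{\alpha},z^{\beta}\rangle$ as a sum over decompositions $\beta=\hat\beta_1+\cdots+\hat\beta_n+\hat\alpha$ with weight $S(z^{\beta})\prod_{i=1}^n\frac{\langle D^{w_i}z^{\beta_i},z^{\hat\beta_i}\rangle}{S(z^{\hat\beta_i})}\cdot\frac{\langle\prod_{i=1}^n\partial_{z_{(0,w_i)}}z^{\alpha},z^{\hat\alpha}\rangle}{S(z^{\hat\alpha})}$.

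Third, I would invoke the restriction $n=|z^{\alpha}|$: since $\prod_{i=1}^n\partial_{z_{(0,w_i)}}$ lowers the total degree in the variables $z_{(0,\cdot)}$ by exactly $n$ and $z^{\alpha}$ has degree $n$ in them, the remainder $z^{\hat\alpha}$ must be $z^{\mathbf{0}}$, and the pairing is nonzero only when $z^{\alpha}=\prod_{i=1}^n z_{(0,w_i)}$, in which case $\langle\prod_{i=1}^n\partial_{z_{(0,w_i)}}\prod_{i=1}^n z_{(0,w_i)},z^{\mathbf{0}}\rangle=\alpha!$. This collapses the sum over $z^{\alpha}$ to a single term, imposes the constraint $\prod_i z_{(0,w_i)}\in\CM_0$, and substituting into the duality identity of the first step yields the claimed formula for $E$. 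The order among $\hat\beta_1,\dots,\hat\beta_n$ is retained because the words $w_i$ are ordered, whereas the outer sum over $\CF$ removes the overcounting from permuting identical pairs $(z^{\beta_i},w_i)$ inside the forest, exactly as in Example~\ref{example_EC}; a brief remark to that effect would be included.

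The step I expect to be the main obstacle is the bookkeeping around the symmetry factors: one must check that \eqref{symmetry_SPDE} and \eqref{symmetry_SPDE_forest} still permit the factorisation $S(z^{\beta})=\prod_{i=1}^n S(z^{\hat\beta_i})$ in the presence of the non-commuting letters and the ordering convention defining $\cA_c$, and that the composite operators $D^{w_i}$ — which interleave the commuting $\partial_i$ with the non-commuting $D^{(\bn)}$ through Proposition~\ref{derivation_basis} — interact correctly with the pairing. Since each $D^{w_i}$ is applied to a single multi-indice $z^{\beta_i}$ and then paired against $z^{\hat\beta_i}$, all of this reduces to the already-established identities, so no fresh commutation computation is required; the remainder of the argument is a verbatim transcription of the proof of Proposition~\ref{LOT_coproduct2}.
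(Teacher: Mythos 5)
Your proposal is correct and matches the paper's approach: the paper's proof of Proposition~\ref{extraction_SPDE_1} consists precisely of the observation that one repeats the proof of Proposition~\ref{LOT_coproduct2} with the $k_i$ replaced by the $w_i$, which is exactly the transcription you carry out. Your additional checks on the symmetry-factor multiplicativity and on the pairing against $D^{w_i}z^{\beta_i}$ are sensible amplifications of the same argument rather than a different route.
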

\begin{proof} The proof is similar to the one of Proposition~\ref{LOT_coproduct2} with the $k_i$ replaced by the $w_i$.
\end{proof}
By using the adjoint maps given in Proposition~\ref{adjoint_SPDEs} we can have the following alternative formula. 
\begin{theorem} \label{extraction_SPDE_A}
	An explicit formula of the extraction-contraction multi-indices coproduct $\Delta^{\!-} $ in the context of SPDE is 
	\begin{equs} \label{explicit_formula_extraction_1_SPDE_2}
		\begin{aligned}
			\Delta^{\! -} z^{\beta}  =
		\sum_{\tilde{\prod}_{i=1}^n z^{\beta_i} \in \CF}
		\sum_{\substack{w_1,...,w_n \in \cA_c\\ \prod_{i} z_{(0,w_i)} \in \CM_0}}   	
		E(\tilde{\prod}_{i=1}^nz^{\beta_i}, z^\alpha ,z^{\beta})  
		\tilde{\prod}_{i=1}^n z^{\beta_i}
		\otimes   z^\alpha,
	 \end{aligned}
	\end{equs}
	with
	\begin{equs}
		E(\tilde{\prod}_{i=1}^nz^{\beta_i}, \prod_{i=1}^n z_{(0,w_i)} ,z^{\beta})   
		=  	\sum_{\beta  = \hat{\beta}_1 + \cdots + \hat{\beta}_n}	  \frac{\alpha!}{S(z^\alpha)S_{\tiny{\text{ext}}}(\tilde{\prod}_{i=1}^n z^{\beta_i})}
		\prod_{i=1}^n
		\frac{\langle  
			z^{\beta_i}, \bar D^{w_i} z^{\hat{\beta}_i} \rangle }{S(z^{{\beta}_i})}
	\end{equs}
	where $\bar D^{w_i}$ is the short-hand notation of $\prod_{j=1}^r\bar D^{(\bn_j)}  \bar\partial^{k}$ for $w_i = uv$ where $u=k$ and 
	$v= \bn_1,\cdots,\bn_r$. Moreover, we have order among $\hat{\beta}_1, \cdots , \hat{\beta}_n$.
	Note that $ n \ge 1$ as we cannot insert the empty multi-indice.	
\end{theorem}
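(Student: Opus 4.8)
The plan is to follow verbatim the strategy used for the ODE statement in Theorem~\ref{LOT_coproduct_3}: start from the closed formula of Proposition~\ref{extraction_SPDE_1} and transport each factor $\langle D^{w_i} z^{\beta_i}, z^{\hat{\beta}_i}\rangle$ onto its adjoint side by means of Proposition~\ref{adjoint_SPDEs}. Since neither the forest structure, nor the constraint $n=|z^\alpha|$, nor the condition $\prod_i z_{(0,w_i)}\in\CM_0$ is affected by this manipulation, it is enough to rewrite the coefficient $E(\tilde{\prod}_{i=1}^nz^{\beta_i}, z^\alpha ,z^{\beta})$.

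First I would record that, for $w_i = uv$ with $u=k$ and $v=\bn_1,\cdots,\bn_r$, the operator $D^{w_i}=\partial^{k}\prod_{j=1}^{r}D^{(\bn_j)}$ is a composition of the elementary operators $\partial^{k}$ and $D^{(\bn_j)}$, whence its adjoint is the reverse composition of the elementary adjoints, which by Proposition~\ref{adjoint_SPDEs} is exactly the operator written $\bar D^{w_i}=\prod_{j=1}^r\bar D^{(\bn_j)}\bar\partial^{k}$ in the statement. Here one uses that the $D^{(\bn)}$ commute among themselves and the $\partial_i$ commute among themselves, so that the relative order inside each block is irrelevant and taking the adjoint only swaps the two blocks. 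Consequently $\langle D^{w_i} z^{\beta_i}, z^{\hat{\beta}_i}\rangle=\langle z^{\beta_i}, \bar D^{w_i} z^{\hat{\beta}_i}\rangle$ for every $i$.

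Next I would substitute this identity into Proposition~\ref{extraction_SPDE_1} and rearrange the symmetry factors, exactly as in the ODE proof. Writing $\frac{1}{S(z^{\hat{\beta}_i})}=\frac{S(z^{\beta_i})}{S(z^{\hat{\beta}_i})}\cdot\frac{1}{S(z^{\beta_i})}$ and using the multiplicativity of the SPDE symmetry factor \eqref{symmetry_SPDE}, namely $S(z^{\beta}z^{\bar{\beta}})=S(z^{\beta})S(z^{\bar{\beta}})$, together with the splitting $\beta=\hat{\beta}_1+\cdots+\hat{\beta}_n$ which forces $\prod_{i=1}^n S(z^{\hat{\beta}_i})=S(z^{\beta})$, the factor $S(z^{\beta})$ cancels and $\prod_{i=1}^n S(z^{\beta_i})/S(\tilde{\prod}_{i=1}^n z^{\beta_i})$ becomes $1/S_{\tiny{\text{ext}}}(\tilde{\prod}_{i=1}^n z^{\beta_i})$ by the definition of $S_{\tiny{\text{ext}}}$. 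This turns
\begin{equs}
E = \sum_{\beta = \hat{\beta}_1+\cdots+\hat{\beta}_n}\frac{\alpha!\,S(z^{\beta})}{S(z^\alpha)\,S(\tilde{\prod}_{i=1}^n z^{\beta_i})}\prod_{i=1}^n\frac{\langle D^{w_i}z^{\beta_i}, z^{\hat{\beta}_i}\rangle}{S(z^{\hat{\beta}_i})}
\end{equs}
into
\begin{equs}
E = \sum_{\beta = \hat{\beta}_1+\cdots+\hat{\beta}_n}\frac{\alpha!}{S(z^\alpha)\,S_{\tiny{\text{ext}}}(\tilde{\prod}_{i=1}^n z^{\beta_i})}\prod_{i=1}^n\frac{\langle z^{\beta_i}, \bar D^{w_i}z^{\hat{\beta}_i}\rangle}{S(z^{\beta_i})},
\end{equs}
which is the claimed expression; plugging it back into \eqref{explicit_formula_extraction_1_SPDE} gives \eqref{explicit_formula_extraction_1_SPDE_2}.

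The step I expect to require the most care is not the algebra — which is a line-by-line transcription of Theorem~\ref{LOT_coproduct_3} — but the bookkeeping around the infinitude of the sums over $w_1,\dots,w_n$, since $D^{(\bn)}$ lowers decorations (Proposition~\ref{derivation_basis}). As in the discussion preceding Proposition~\ref{extraction_SPDE_1}, the manipulation should be carried out inside the bigrading introduced there, so that each homogeneous component of $\Delta^{\!-} z^{\beta}$ is a finite sum and the termwise passage to adjoints is legitimate. One should also check that the adjoint relations of Proposition~\ref{adjoint_SPDEs} are invoked with admissible arguments: they are stated for a populated $\beta$ and a $\hat{\beta}$ satisfying the degree constraints there, and one must verify that the $\hat{\beta}_i$ occurring in the splitting meet these constraints, which follows from the population condition \eqref{populated_SPDE} and the fact that $D^{w_i}z^{\beta_i}$ is paired with $z^{\hat{\beta}_i}$.
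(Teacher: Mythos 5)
Your proposal is correct and follows essentially the same route as the paper, which simply says the proof is that of Theorem~\ref{LOT_coproduct_3} with the $k_i$ replaced by the $w_i$: pass to adjoints via Proposition~\ref{adjoint_SPDEs} in the coefficient $E$ of Proposition~\ref{extraction_SPDE_1}, use $\prod_{i=1}^n S(z^{\hat{\beta}_i})=S(z^{\beta})$ and the definition of $S_{\tiny{\text{ext}}}$ to rearrange the symmetry factors. Your additional remarks on the order reversal in the adjoint of the composition $D^{w_i}$ and on carrying out the manipulation within the bigrading are sound and in fact more explicit than what the paper records.
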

\begin{proof}
	The proof is similar to the one of Theorem~\ref{LOT_coproduct_3} with the $k_i$ replaced by the $w_i$.
	\end{proof}

\end{document}